\documentclass[reqno]{amsart}

\usepackage[utf8]{inputenc}
\usepackage{times}
\usepackage[UKenglish]{babel}
\usepackage[T1]{fontenc}
\usepackage{stmaryrd,verbatim,calligra,mathrsfs}
\usepackage[normalem]{ulem}

\usepackage[a4paper,top
=2cm,bottom=2cm,left=2cm,right=2cm,marginparwidth=1.75cm]{geometry}

\usepackage{amsmath,amssymb,amsfonts,amsthm,upgreek}
\usepackage{mathrsfs, mathtools}
\usepackage{graphicx}
\usepackage{tikz-cd}
\usepackage[all]{xy}
\usepackage{cancel}
\usepackage{appendix}
\usepackage{enumerate}
\usepackage{multicol}
\usetikzlibrary{calc}

\usepackage[colorlinks=true, allcolors=blue,backref=page]{hyperref}
\usepackage[initials,alphabetic]{amsrefs}

\newcommand{\bN}{{\bf N}}

\newcommand{\bP}{{\bf P}}

\newcommand{\bZ}{{\bf Z}}

\newcommand{\sE}{\mathscr{E}}
\newcommand{\sF}{\mathscr{F}}
\newcommand{\sG}{\mathscr{G}}

\newcommand{\sI}{\mathscr{I}}

\newcommand{\sL}{\mathscr{L}}

\newcommand{\sO}{\mathscr{O}}

\newcommand{\sQ}{\mathscr{Q}}

\newcommand{\N}{\bN}
\newcommand{\Z}{\bZ}

\newcommand{\R}{\mathbb{R}}
\newcommand{\C}{\mathbb{C}}

\newcommand{\Pic}{{\rm Pic}}
\renewcommand{\P}{\bP}

\renewcommand{\epsilon}{\varepsilon}

\newcommand{\Hom}{{\mathrm{Hom}}}
\DeclareMathOperator{\chom}{\mathscr{H}\text{\kern -3pt {\calligra\large om}}\,}

\newcommand{\ch}{{\rm ch}}

\newcommand{\id}{\mathrm{id}}

\renewcommand{\Im}{\mathop{\mathrm{Im}}}

\renewcommand{\Re}{\mathop{\mathrm{Re}}}

\newcommand{\tr}{\mathop{\mathrm{tr}}\nolimits}
\newcommand{\rk}{\mathop{\mathrm{rk}}\nolimits}

\newcommand{\qandq}{\quad\text{and}\quad}
\newcommand{\qwithq}{\quad\text{with}\quad}
\newcommand{\qforq}{\quad\text{for}\quad}
\newcommand{\qforallq}{\quad\text{for all}\quad}

\newcommand{\ses}[3]{#1 \hookrightarrow #2 \twoheadrightarrow #3}

\def\<{\mathopen{}\left<}
\def\>{\right>\mathclose{}}
\def\({\mathopen{}\left(}
\def\){\right)\mathclose{}}

\usepackage{multicol, color}

\definecolor{gold}{rgb}{0.85,.66,0}
\definecolor{cherry}{rgb}{0.9,.1,.2}
\definecolor{burgundy}{rgb}{0.8,.2,.2}
\definecolor{orangered}{rgb}{0.85,.3,0}
\definecolor{orange}{rgb}{0.85,.4,0}
\definecolor{olive}{rgb}{.45,.4,0}
\definecolor{lime}{rgb}{.6,.9,0}
\definecolor{green}{rgb}{.2,.7,0}
\definecolor{grey}{rgb}{.4,.4,.2}
\definecolor{brown}{rgb}{.4,.3,.1}

\theoremstyle{plain}
\newtheorem{theorem}{Theorem}[section]
\newtheorem*{theorem*}{Theorem}
\newtheorem{proposition}[theorem]{Proposition}
\newtheorem{lemma}[theorem]{Lemma}

\theoremstyle{definition}
\newtheorem{definition}[theorem]{Definition}

\theoremstyle{remark}
\newtheorem{remark}[theorem]{Remark}

\numberwithin{equation}{section}

\title{Asymptotically Z-stable bundles over projective surfaces}
\author{Luiz Lara}
\address[Luiz Lara]{Instituto de Matm\'{a}tica, Estat\'{i}stica e Computa\c{c}\~{a}o Cient\'{i}fica (IMECC), Universidade Estadual de Campinas (UNICAMP), Campinas - SP, Brazil}
\email{\href{luizlara@ime.unicamp.br}{luizlara@ime.unicamp.br}}
\author{Henrique N. Sá Earp}
\address[Henrique N. Sá Earp]{Instituto de Matm\'{a}tica, Estat\'{i}stica e Computa\c{c}\~{a}o Cient\'{i}fica (IMECC), Universidade Estadual de Campinas (UNICAMP), Campinas - SP, Brazil}
\email{\href{henrique.saearp@ime.unicamp.br}{henrique.saearp@ime.unicamp.br}}
\date{\today}

\begin{document}

\begin{abstract}
    We study the existence of asymptotically $Z$-stable (a.Z stable) bundles over polycyclic surfaces. Our choice of polynomial central charge is related to the existence of solutions of the deformed Hermitian--Yang--Mills equations, with vanishing $B$-field, in the large-volume limit. The main result is a technique to construct rank $3$, strictly a.Z-stable bundles as extensions of a line bundle by a $\mu$-stable bundle of rank $2$. In particular, this leads to new examples of strictly a.Z-stable bundles over $\P^2$, the product $\P^1\times \P^1$, and the blow-up $\mathrm{Bl}_q\P^2$. We also present an analogue of the Hoppe criterion for the a.Z-stability of vector bundles of rank $2$, which may be of independent interest.
\end{abstract}
\maketitle

\tableofcontents

\section{Introduction}
\setcounter{theorem}{0}
\renewcommand{\thetheorem}{\arabic{theorem}}
The deformed Hermitian--Yang--Mills (dHYM) equations first emerged in the context of String Theory, where they appeared as the so-called \textit{Fourier--Mukai} transform of the Special Lagrangian equations for submanifolds of a Calabi-Yau manifold $X$. The first version of the equation, which is most prevalent in the literature, was stated in terms of the curvature of a line bundle $\sL\to X$, see eg.~\cites{MarcosMarino_2000,leung_special_2000,madnick_spn-instantons_2024}. A more general version of the dHYM equation, in terms of the curvature of a higher-rank holomorphic vector bundle $\sE\to X$ 
over an arbitrary compact Kähler manifold $(X,g,J,\omega)$, was proposed by Collins and Yau in~\cite{collins_moment_2018}, and it can be stated as follows. 
Let $A$ be a connection on $\sE$; given an integer $k$, we say that $A$ is a \emph{$k$-dHYM connection} if
\begin{align}
\label{eq:dhym-1}
    \Im\left(e^{-i\varphi_k(\sE)}\left(k\omega\otimes\id_\sE - \frac{F_A}{2\pi}\right)^n\right) = 0,
\end{align}
where 
\begin{align}
    \varphi_k(\sE) = \arg\left(\int_X \tr\left(k\omega\otimes\id_\sE - \frac{F_A}{2\pi}\right)^n\right).
\end{align}

In the rank $1$ case, the existence of solutions to the $k$-dHYM equations has been studied in a series of works~\cites{collins_deformed_2017, collins_moment_2018, collins_stability_2022}, from the analytical point of view, by way of techniques from Geometric Invariant Theory. On the other hand, from the perspective of Algebraic Geometry, important progress was made in~\cites{chen_j-equation_2021, chen_supercritical_2021}, where solutions to the dHYM equations are identified with solutions of the so-called $J$-equations, and their existence is shown to be related to the $J$-stability of the underlying vector bundle. In the higher-rank case, some recent work by Éder Correa~\cites{correa2023deformed, correa_dhym_2024} undertakes a systematic study of solutions to the dHYM equations on rank $2$ \emph{decomposable} vector bundles over flag varieties. In particular, examples of \emph{strict} dHYM connections  --- that is, connections satisfying the dHYM equations while failing to be classically HYM --- were constructed, highlighting the subtlety of the dHYM condition. 

Further insights into the existence problem in higher rank were obtained in~\cite{dervan_z-critical_2024}, where the authors considered a more general class of equations, known as $Z$-critical equations, encompassing the dHYM equations as a special case. It was shown that, in the so-called large-volume limit, the existence of solutions is equivalent to a new stability condition termed \emph{asymptotic $Z$-stability}, thus providing a bridge between analytical gauge theory and algebro-geometric stability conditions.
Soon after, in~\cite{keller_z-critical_2024}, the focus shifted to the study of $Z$-critical equations on rank $2$ vector bundles over complex surfaces, beyond the large-volume regime. In this setting, (non-asymptotic) $Z$-stability becomes a necessary condition for the existence of solutions, underscoring the delicate interplay between curvature, stability, and complex geometry in the theory of $Z$-critical connections.

The concept of \emph{asymptotic $Z$-stability} will be central to our study. Let $(X,L)$ be a polarized 
variety, and let $\sE\to X$ be a holomorphic vector bundle. For each $k\in\N$, we define the \emph{$k$-central charge} of $\sE$ by
\begin{align}
\label{eq:k-central-charge}
    Z_k(\sE)\coloneqq 
    i^{n+1} \ch(\sE)\cdot e^{-ikL}\in\C,
\end{align}
where $\ch(\sE)\in H^\ast(X,\R)$ denotes the total Chern character of $\sE$, and the \emph{$k$-slope} of $\sE$ by
\begin{equation}\label{eq:def-k-slope}
    \mu_{k}(\sE) \coloneqq 
    -\frac{\Re(Z_k(\sE))}{\Im(Z_k(\sE))}.
\end{equation}
We say that $\sE$ is \emph{asymptotically $Z$-stable} (a.Z-stable for short) if, for every proper coherent subsheaf $\sF\hookrightarrow\sE$ with torsion-free quotient, there exists $k_0>0$ such that  
\begin{equation}
\label{eq:asym-z-est}
    \mu_k(\sF) \leq \mu_k(\sE), \quad \forall k\geq k_0.
\end{equation}
Such stability guarantees the existence of solutions to ~\eqref{eq:dhym-1} for all sufficiently large $k$, cf. \cite{dervan_z-critical_2024}.
By contrast, recall that the existence of HYM connections is related to Mumford-Takemoto stability, also called  $\mu$-stability, which is defined in terms of the slope 
\begin{equation}
    \mu(\sE) \coloneqq \frac{c_1(\sE)\cdot L^{n-1}}{\rk(\sE)},
\end{equation}
where similarly $\sE$ is called \emph{$\mu$-semistable} if, for every proper coherent subsheaf $\sF\hookrightarrow\sE$,
\begin{equation}
    \mu(\sF) \leq \mu(\sE),
\end{equation}
and when the inequality holds strictly, we say that $\sE$ is \emph{$\mu$-stable}. 

We know from~\cite{dervan_z-critical_2024}  that every a.Z-stable vector bundle is automatically $\mu$-semistable, and it is natural to ask whether there exist a.Z-stable bundles which are not $\mu$-stable, to which we will refer as \textit{strictly} a.Z-stable. This question is also motivated from the analytic point of view, since each of these stability conditions corresponds to the existence of connections on $\sE$ solving the (d)HYM equations (respectively). More precisely, on a strictly a.Z-stable bundle, there exist solutions to the dHYM equation, but not to the classical HYM equation, which suggests that dHYM gauge theory is a genuinely new feature of the polarized variety $(X,L)$. A first nontrivial construction of such bundles is presented in the recent work~\cite{correa_dhym_2024}, obtaining rank $2$ decomposable vector bundles over the full flag variety $\P(T\P^2)$. As natural next steps, we therefore set out to construct  strictly a.Z-stable bundles which are \emph{indecomposable} and have \emph{higher rank}. To the best of our knowledge, these are the first such examples in the literature.

\subsection{Outline and statement of main results}

We start \S\ref{sec:basic-facts} by defining in detail the central charge and $k$-slope for a coherent sheaf $\sE$ over a polarized surface $(X,L)$.
As a consequence we obtain an interesting relation between a.Z-stability and Gieseker stability, when $X$ is a Del Pezzo surface, cf.  Proposition~\ref{prop:az-stability-gieseker}. 
In \S\ref{sec:adapted-hoppe-criteria}, we address cohomological stability criteria: we recall the Hoppe-type criterion for $\mu$-stability over polarized varieties, with special attention to Hartshorne-Serre bundles, restating~\cite{Jardim2017}*{Proposition~13} in our context and notation. 
Finally, we present a Hoppe-type criterion for the a.Z-stability of rank $2$ vector bundles in Proposition~\ref{prop:asymp-hopp}, along the lines of ~\cite{okonek_vector_1980}*{Lemma~1.2.5}, in which some readers may find independent interest. 

Having these preparations in place, \S\ref{sec:const-az-stab} is dedicated to proving our main theorems. We begin by proving Proposition~\ref{teo:strict-z-stable-bun}, which gives a way to construct strictly a.Z-stable bundles, starting from suitable $\mu$-stable vector bundles of rank $2$. This is a version of Maruyama's example~\cite{okonek_vector_1980}*{Pg.~90}, also mentioned in~\cites{dervan_z-critical_2024,keller_z-critical_2024}, in the context of $Z$-stability for other central charges.  
Specializing to the case where our rank $2$ bundles are Hartshorne-Serre, we combine Proposition~\ref{teo:strict-z-stable-bun} and Proposition~\ref{prop:HS-stability} to obtain Theorem~\ref{teo:hom-cond-for-stric-z-stab} below, a recipe for the construction of strictly a.Z-stable bundles over a polarized surface $(X,L)$, in terms of a pair $(Z,D)$, where $Z\subset X$ is a local complete intersection subscheme of points and $D\in\Pic(X)$ is a divisor. Throughout this text, we denote the \emph{length} of $Z$ by $\ell(Z) := \dim H^0(X,\sO_Z)$.

\begin{theorem}
\label{teo:hom-cond-for-stric-z-stab}
    On a polarized polycyclic surface $(X,L)$, let $R_0\subset \Pic(X)$ be a region  such that \[H^0(X,\sO_X(B)) = 0, \qforallq B\notin R_0.\] 
    For each pair $(Z,D)$, where $Z\subset X$ is a local complete intersection subscheme of points and $D\in\Pic(X)$, we define the region 
    \begin{equation}
        R \coloneqq \{B\in R_0 \;|\;B\cdot L \leq D\cdot L\}.
    \end{equation}
    Assume that a given pair $(Z,D)$ satisfies the following conditions:
    \begin{enumerate}[(a)]
        \item\label{cond:1} $D\cdot L > 0$;
        \item\label{cond:2} $D^2 < \ell(Z)$;
        \item\label{cond:3} $H^0(X,\sI_Z(B)) = 0$, for all $B\in R$;
        \item\label{cond:4} either $H^1(X,\sO_X(-D)) \neq 0$, or $H^2(X,\sO_X(-D)) = 0$ and $\ell(Z) > 2\chi(\sO_X)$;
        \item\label{cond:5} $H^2(X,\sO_X(-2D)) = 0$.
    \end{enumerate}
    Then, the rank $3$ sheaf $\sF_{Z,D}$ obtained as the double extension 

    \begin{equation}\label{eq:double-extension}
        \begin{tikzcd}[row sep=0.4cm, column sep=0.4cm]
            & \sO_X \arrow[hook]{r} & \sE_{Z,D} \arrow[two heads]{r} \arrow[hook]{d}& \sI_Z(2D) \\
            & & \sF_{Z,D} \arrow[two heads]{d} \\
            & & \sO_X(D)    
        \end{tikzcd}        
    \end{equation}
    is an indecomposable, holomorphic, strictly a.Z-stable vector bundle.
\end{theorem}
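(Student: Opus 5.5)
The plan is to reduce the statement to Proposition~\ref{teo:strict-z-stable-bun} applied to the rank $2$ Hartshorne--Serre bundle $\sE_{Z,D}$, after checking via Proposition~\ref{prop:HS-stability} that $\sE_{Z,D}$ exists, is locally free and $\mu$-stable. We proceed in four steps.

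\textbf{Step 1: the rank $2$ bundle exists.} The Serre correspondence produces a locally free extension $\sO_X\hookrightarrow\sE_{Z,D}\twoheadrightarrow\sI_Z(2D)$ as soon as the Cayley--Bacharach property holds for $(Z,\omega_X(2D))$. On a polycyclic surface it is enough, as in \cite{Jardim2017}, that $H^2(X,\sO_X(-2D))=0$: then $H^0(\omega_X(2D))=0$ by Serre duality, so Cayley--Bacharach holds vacuously. This is condition~(\ref{cond:5}). The resulting bundle has
\[
c_1(\sE_{Z,D})=2D,\qquad c_2(\sE_{Z,D})=\ell(Z),\qquad \mu(\sE_{Z,D})=D\cdot L .
\]

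\textbf{Step 2: $\mu$-stability.} We invoke Proposition~\ref{prop:HS-stability}, the Hoppe criterion for Hartshorne--Serre bundles. It asks that $H^0(\sE_{Z,D}(-B))=0$ for every line bundle $\sO_X(B)$ with $B\cdot L\ge D\cdot L$. Twisting the defining sequence, such a section either comes from $H^0(\sO_X(-B))$ or maps to a nonzero element of $H^0(\sI_Z(2D-B))$.
\begin{itemize}
\item The first group vanishes because $-B\cdot L\le -D\cdot L<0$, using condition~(\ref{cond:1}).
\item For the second, set $B'=2D-B$, so that $B'\cdot L\le D\cdot L$. Either $B'\notin R_0$, and then $H^0(\sO_X(B'))=0$ by the defining property of $R_0$; or $B'\in R$, and then condition~(\ref{cond:3}) kills $H^0(\sI_Z(B'))$.
\end{itemize}
Hence $\sE_{Z,D}$ is $\mu$-stable.

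\textbf{Step 3: the second extension.} We need a nonsplit extension $\sE_{Z,D}\hookrightarrow\sF_{Z,D}\twoheadrightarrow\sO_X(D)$, that is, a nonzero class in
\[
\mathrm{Ext}^1(\sO_X(D),\sE_{Z,D})=H^1(\sE_{Z,D}(-D)).
\]
Twist the first sequence by $-D$ to get $\sO_X(-D)\to\sE_{Z,D}(-D)\to\sI_Z(D)$ and read off the long exact sequence. The two branches of condition~(\ref{cond:4}) are handled separately.
\begin{itemize}
\item If $H^1(\sO_X(-D))\ne0$: the map $H^1(\sO_X(-D))\to H^1(\sE_{Z,D}(-D))$ is injective, because the connecting map from $H^0(\sI_Z(D))$ vanishes, since $H^0(\sI_Z(D))=0$ by condition~(\ref{cond:3}) with $B=D\in R$ (or $D\notin R_0$). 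So $H^1(\sE_{Z,D}(-D))\ne0$.
\item Otherwise use Riemann--Roch:
\[
\chi(\sE_{Z,D}(-D))=\chi(\sO_X(-D))+\chi(\sO_X(D))-\ell(Z)=D^2+2\chi(\sO_X)-\ell(Z).
\]
Conditions~(\ref{cond:2}) and~(\ref{cond:4}) are tailored to make this quantity negative. Then $H^1\ne0$ follows once $h^0=0$ (Step 2, since $\sE_{Z,D}(-D)$ is stable of slope $0$ and has no sections).
\end{itemize}
The extension is locally free, being an extension of bundles, and $\mu(\sF_{Z,D})=\mu(\sE_{Z,D})=D\cdot L$.

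\textbf{Step 4: strict a.Z-stability and indecomposability.} $\sF_{Z,D}$ is not $\mu$-stable because $\sE_{Z,D}$ has the same slope, so only a.Z-stability must be proved. This is what Proposition~\ref{teo:strict-z-stable-bun} provides. It needs $\sE_{Z,D}$ to be $\mu$-stable (Step 2), the extension to be nonsplit (Step 3), and a numerical inequality between the subleading terms of the $k$-slopes of $\sE_{Z,D}$ and $\sO_X(D)$, which amounts to $\Delta$-type comparisons $D^2<\ell(Z)$ as in condition~(\ref{cond:2}).

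Indecomposability then follows because a.Z-stable sheaves are simple, or directly: a splitting of $\sF_{Z,D}$ would produce a destabilizing subsheaf of equal slope with the wrong second-order term.

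The main obstacle is Step 4, namely verifying that the hypotheses of Proposition~\ref{teo:strict-z-stable-bun} are met exactly by conditions~(\ref{cond:1})--(\ref{cond:5}). In particular, the sign of the $k^{n-2}$ term in $\mu_k$ must be the one forced by $D^2<\ell(Z)$, and I would check this first via the explicit $k$-slope expansion from \S\ref{sec:basic-facts}.
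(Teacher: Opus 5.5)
Your Steps 1 and 2, and the first branch of Step 3, match the paper's argument: Lemma~\ref{lemm:non-triv-ext-rk-2} (Cayley--Bacharach) from (e), Proposition~\ref{prop:HS-stability} from (c), and injectivity of $H^1(\sO_X(-D))\to H^1(\sE_{Z,D}(-D))$ because $H^0(\sI_Z(D))=0$. Step 4 is not the real obstacle. Proposition~\ref{teo:strict-z-stable-bun} states its criterion as $c_2(\sE)>D^2$, and $c_2(\sE_{Z,D})=\ell(Z)$, so it is literally (b).

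\textbf{The gap is the second branch of Step 3.} Your formula $\chi(\sE_{Z,D}(-D))=D^2+2\chi(\sO_X)-\ell(Z)$ is correct. However, (b) and (d) only give $\ell(Z)>D^2$ and $\ell(Z)>2\chi(\sO_X)$ \emph{separately}. They do not give $\ell(Z)>D^2+2\chi(\sO_X)$, so nothing forces this Euler characteristic to be negative.

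No sharper estimate rescues the step. In that branch $h^0(\sO_X(-D))=0$ by (a), so $\chi(\sO_X(-D))=0$ and $\chi(\sO_X(D))=D^2+2\chi(\sO_X)$. Hence
\[
h^1(\sE_{Z,D}(-D))=h^1(\sI_Z(D))=\ell(Z)-D^2-2\chi(\sO_X)+h^2(\sO_X(D))
\]
exactly. For a concrete failure, take $\P^2$ with $L=D=H$ and $Z$ three non-collinear points. Then (a), (b), (c), (e) and the second branch of (d) all hold: $1<3$, no line contains $Z$, and $3>2$. Yet $h^1(\sI_Z(1))=3-1-2+0=0$, because three non-collinear points impose independent conditions on lines. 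So $\mathrm{Ext}^1(\sO_X(1),\sE_{Z,H})=0$. The only possible $\sF_{Z,H}$ is $\sE_{Z,H}\oplus\sO_X(1)$, which is decomposable and is a.Z-destabilized by the summand $\sO_X(1)$.

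So this step cannot be completed from the stated hypotheses. The paper's own proof breaks at the same point. It asserts $\ell(Z)+h^2\sO_X(D)-\chi(\sO_X(D))\geq \ell(Z)-\chi(\sO_X)$, which already fails in the example above: the left side is $\ell(Z)-3$ and the right side is $\ell(Z)-1$.

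The fix is to strengthen the second alternative of (d), for example to $\ell(Z)>D^2+2\chi(\sO_X)$. With that hypothesis your Riemann--Roch argument works as written. It then needs neither $H^2(\sO_X(-D))=0$ nor $h^0=0$, since $h^1\geq-\chi$ always. The paper's $\P^2$ example has $\ell(Z)\geq 4$, so it still satisfies the strengthened condition.
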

\begin{remark} The interpretation of each condition in the theorem above can be seen in the following picture.  
\begin{figure}[h!]
    \centering
    \resizebox{\textwidth}{!}{%
\begin{tikzpicture}
\tikzstyle{every node}=[font=\large]

\node at (6.5,16.5) {\small\textbf{Cohomological condition}};
\node at (14.75,16.5) {\small  \textbf{Geometric interpretation}};

\node at (7.25,15.25) {\small $D \cdot L > 0$};
\node at (7.25,14) {\small $D^2 < \ell(Z)$};
\node at (6.25,12.75) {\small $H^0(X,\sI_Z(B)) = 0, \;\forall B\in R$};
\node at (5,11.5) {\small $H^1(X,\sO_X(D)) \neq 0$, or};
\node at (5,10.75) {\small $H^2(X,\sO_X(-D)) = 0$, with $\ell(Z) > 2\chi(X)$};
\node at (6.5,9.5) {\small $H^2(X,\sO_X(-2D)) = 0$};

\draw (5.75,15.75) rectangle (8.75,14.75); 
\draw (5.75,14.5) rectangle (8.75,13.5); 
\draw (3.8,13.25) rectangle (8.75,12.25); 
\draw (1.15,12) rectangle (8.75,10.25); 
\draw (4.25,10) rectangle (8.75,9); 

\node at (14,15.25) {\small $\mu(\sE) > 0$};
\node at (15,14) {\small $\sE$ does not a.Z-destabilize $\sF$};
\node at (15,12.75) {\small $\sE$ is $\mu$-stable};
\node at (15,11.25) {\small $\sF$ is a non-trivial extension};
\node at (16.25,9.5) {\small $\sE$ is a locally free non-trivial extension};

\draw (12.5,15.75) rectangle (15.5,14.75); 
\draw (12.5,14.5) rectangle (17.5,13.5); 
\draw (12.5,13.25) rectangle (17.5,12.25); 
\draw (12.5,12) rectangle (17.5,10.25);
\draw (12.5,10) rectangle (20,9);

\draw[<->, >=Stealth] (9.5,15.25) -- (12,15.25);
\draw[<->, >=Stealth] (9.5,14) -- (12,14);
\draw[<->, >=Stealth] (9.5,12.75) -- (12,12.75);
\draw[<->, >=Stealth] (9.5,11.25) -- (12,11.25);
\draw[<->, >=Stealth] (9.5,9.5) -- (12,9.5);

\end{tikzpicture}
}
    \caption{Geometric interpretation of Theorem~\ref{teo:hom-cond-for-stric-z-stab}}
    \label{fig:enter-label}
\end{figure}
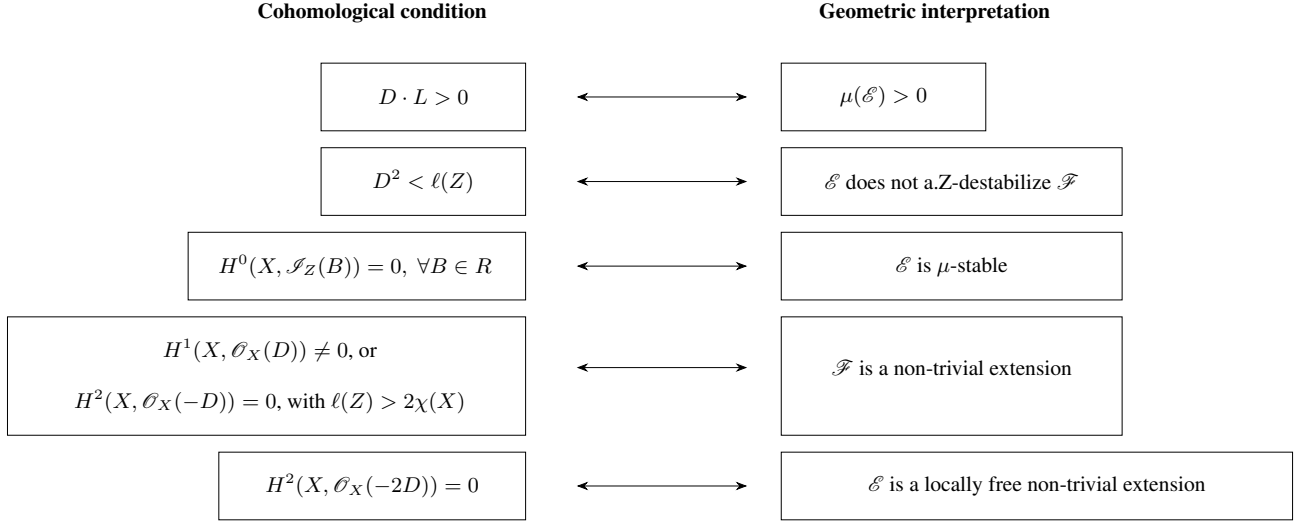

\end{remark}
Applying this construction, we obtain in \S\ref{sec:examples} some new examples of strictly a.Z-stable bundles over surfaces: 
\begin{enumerate}
    \item On $X=\P^2$, we have $\Pic(X) = \Z H$, where $H$ is the hyperplane class. In this case $R_0 = \{mH\in\Z\,|\,m\geq 0\}$ and we take the polarization $L = H$. Choosing $D = kH$, we have  \[R = \{m\in\Z\;|\;0\leq m \leq k\}.\]
    The conditions of Theorem~\ref{teo:hom-cond-for-stric-z-stab} are then satisfied for $k=1$ when $Z$ is the generic intersection of two curves in $\P^2$ with degrees at least $2$. Still on $\P^2$, we also obtain a direct example from the same scheme $Z$, taking $D = 2H$, but it does not hinge on  Theorem~\ref{teo:hom-cond-for-stric-z-stab}.
    \item On $X = \P^1\times \P^1$, we have $\Pic(X) = \Z A\oplus \Z B$, where  $A$ and $B$ are the classes of each copy of $\P^1\subset X$. In this case $R_0\subset \Z^2$ is the first quadrant and we take the polarization $L = A+B$. Choosing $D = -kA+\ell B$, for $k,\ell\in \Z$, we have
    \begin{equation*}
        R = \{aA+bB\;|\; a,b\geq 0\text{ and }a+b \leq \ell-k\}. 
    \end{equation*}
    The conditions of Theorem~\ref{teo:hom-cond-for-stric-z-stab} are then satisfied, if $\ell > k > 0$, when $Z=C_1\cap C_2$ is the intersection of curves with respective bi-degree $(a_i,b_i)$ such that $a_i,b_i > \ell$.
    \item On $X = {\rm Bl}_q\P^2$, we have $\Pic(X) = \Z H\oplus \Z E$, where $H$ is the hyperplane class and $E$ is the exceptional divisor. In this case $R_0=\{aH+ bE \;|\; a\geq 0\text{ and }a+b\geq 0\}$ (see Lemma~\ref{lemm: blp2-null-cohom}) and we take the polarization $L = 3H-E$. First, choosing $D = -H+4E$, we have
    \[R = \{0, E\}.\] 
    The conditions of Theorem~\ref{teo:hom-cond-for-stric-z-stab} are then satisfied when $Z$ is the (pullback of) intersection of two lines on $\P^2$ that do not pass through the blown-up point $q$. Second, choosing $D = -H+5E$, we have
    \[R = \{0, E, H-E\}.\]
    The conditions of Theorem~\ref{teo:hom-cond-for-stric-z-stab} are then satisfied when $Z$ is the intersection of $C_1 = 2H$ and $C_2 = 2H-2E$.  
\end{enumerate}

\setcounter{theorem}{0}
\renewcommand{\thetheorem}{\thesection.\arabic{theorem}}

\bigskip

\paragraph{\textbf{Acknowledgements}} The authors are grateful to Marcos Jardim and Éder Correa for proposing this problem and contributing with fruitful subsequent discussions. 
LL has been funded by the São Paulo Research Foundation (FAPESP), Brasil. Process Number \mbox{\#2020/15054-2}. 
HSE has been funded by the the Brazilian National Council for Scientific and Technological Development (CNPq)  \mbox{[307217/2017--5]} and the São Paulo Research Foundation (FAPESP), Brasil, Process Number  \mbox{\#2018/21391--1} and the BRIDGES: Brazil-France interplays in Gauge Theory, extremal structures and stability  collaboration (FAPESP-ANR), Brasil, Process Number \mbox{\#2021/04065--6}.

\section{Basic facts and definitions}
\label{sec:basic-facts}

Throughout this paper, $(X,L)$ denotes a polarized algebraic surface. 
We also assume that $X$ is  \emph{polycyclic}, in the sense that $\Pic(X) \cong \Z^r$, for some $r\in\N$.
The polarization $L$ makes $X$ a projective variety, and therefore, we have an isomorphism $\Pic(X) \cong \mathrm{Div}(X)/\sim$, defined modulo linear equivalence of divisors. We will denote by $\sO_X(D)$ the line bundle associated to the divisor $D$, and we will also write $D$ for its divisor class $[D]\in\mathrm{Div}(X)/\sim$. We will now describe several quantities derived from these objects. 

Suppose that $\sE\to X$ is a coherent sheaf and let $\alpha \coloneqq\frac{L^2}{2}$. For each $k\in\N$, the \emph{$k$-central charge} of $\sE$, mentioned in ~\eqref{eq:k-central-charge}, is defined by 
\begin{equation}
\label{eq:k-central-charge-2}
    Z_k(\sE) \coloneqq -k c_1(\sE)\cdot L + i\left(\alpha k^2\rk(\sE) - \ch_2(\sE)\right),
\end{equation}
and the \emph{$k$-slope} of $\sE$ is
\begin{equation}
\label{eq:k-slope-2}
    \mu_k(\sE) \coloneqq \frac{kc_1(\sE)\cdot L}{\alpha k^2\rk(\sE) - \ch_2(\sE)}.
\end{equation}
The following result, proved in~\cite{dervan_z-critical_2024}*{Lemma~2.11},
relates the asymptotic $Z$-stability to $\mu$-stability. 
\begin{lemma}
    If $\sE$ is a a.Z-stable sheaf, then $\sE$ is $\mu$-semistable.
\end{lemma}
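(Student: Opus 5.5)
The plan is to argue by contraposition: assume $\sE$ is not $\mu$-semistable and produce a subsheaf that destabilizes it for all large $k$. The key observation is that, as $k\to\infty$, the $k$-slope admits an expansion whose leading term is the Mumford slope. Indeed, for a sheaf $\sG$ of positive rank, \eqref{eq:k-slope-2} gives
\begin{equation*}
    \mu_k(\sG) = \frac{k\, c_1(\sG)\cdot L}{\alpha k^2 \rk(\sG) - \ch_2(\sG)} = \frac{1}{\alpha k}\,\frac{c_1(\sG)\cdot L}{\rk(\sG)}\left(1 + \frac{\ch_2(\sG)}{\alpha k^2\rk(\sG)} + O(k^{-4})\right).
\end{equation*}
Since $X$ is a surface, $L^{n-1}=L$, so this reads $\mu_k(\sG) = \frac{\mu(\sG)}{\alpha k} + O(k^{-3})$. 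The leading asymptotic order of $\mu_k$ is therefore governed by $\mu$.

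With the expansion in hand, suppose $\sE$ is not $\mu$-semistable. Then there is a proper coherent subsheaf $\sF\hookrightarrow\sE$ with $\mu(\sF)>\mu(\sE)$.

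First I replace $\sF$ by a subsheaf with torsion-free quotient, as the definition of a.Z-stability requires. I take the saturation $\sF'$ of $\sF$ in $\sE$, that is, the kernel of $\sE\to (\sE/\sF)/\mathrm{torsion}$. Then $\sF'$ has the same rank as $\sF$, and $c_1(\sF')-c_1(\sF)$ is an effective class, namely $c_1$ of a torsion sheaf. Hence $c_1(\sF')\cdot L \geq c_1(\sF)\cdot L$ and $\mu(\sF')\geq\mu(\sF)>\mu(\sE)$. The subsheaf $\sF'$ is still proper, since its rank is positive and strictly less than that of $\sE$, or, if the ranks are equal, the slope inequality would fail. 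Here I assume $\sE$ is torsion-free, as is implicit in the notion of $\mu$-semistability.

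Next I compare the two $k$-slopes. Multiplying by $\alpha k>0$ gives
\begin{equation*}
    \alpha k\bigl(\mu_k(\sF')-\mu_k(\sE)\bigr) = \mu(\sF')-\mu(\sE) + O(k^{-2}).
\end{equation*}
The right-hand side tends to a positive constant, so it is positive for all large $k$. Thus for every $k_0$ there is some $k\geq k_0$ with $\mu_k(\sF')>\mu_k(\sE)$, which violates \eqref{eq:asym-z-est}. This contradicts the assumption that $\sE$ is a.Z-stable.

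The only delicate points are bookkeeping rather than substance. I must check that the denominators $\alpha k^2\rk - \ch_2$ are positive for large $k$, which holds because the ranks are positive and $\alpha>0$. I must also handle the saturation step carefully so that the quotient is torsion-free while the slope does not decrease. I expect no genuine obstacle beyond these.
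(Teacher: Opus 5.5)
Your proof is correct and is essentially the standard argument. The paper gives no proof of its own here (it cites \cite{dervan_z-critical_2024}), but your leading-order comparison $\alpha k\,\mu_k = \mu + O(k^{-2})$ is exactly what drives the first part of the proof of Proposition~\ref{prop:az-stability-dim2}; your saturation step, which secures the torsion-free quotient required by the definition of a.Z-stability, is a detail the paper glosses over, and you handle it correctly.
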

This allows us to characterize a.Z-stability on surfaces in a way which will be useful later in the proof of Proposition~\ref{prop:az-stability-gieseker}, when comparing it with Gieseker stability if $X$ is a Del Pezzo surface. The main feature of Del Pezzo surfaces is their natural anti-canonical  polarization, which makes it possible to compare stability conditions via the Riemann-Roch formula.
\begin{proposition}
\label{prop:az-stability-dim2}
    A coherent sheaf $\sE\to X$ is a.Z-stable if, and only if, for every proper torsion-free subsheaf $\sF\hookrightarrow \sE$, exactly one of the following conditions holds:
    \begin{enumerate}[i)]
        \item\label{it:1} $\mu(\sF) < \mu(\sE)$;
        \item\label{it:2} $\mu(\sF)=\mu(\sE)$ and $\ch_2(\sF)c_1(\sE)\cdot L < \ch_2(\sE)c_1(\sF)\cdot L.$
    \end{enumerate}
    In particular, if $\sE$ is a.Z stable and $c_1(\sE)\cdot L = 0$, then $\sE$ is $\mu$-stable.
\end{proposition}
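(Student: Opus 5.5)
The plan is to compare $\mu_k(\sF)$ and $\mu_k(\sE)$ through an explicit expansion in $1/k$. Write $r_\sF=\rk\sF$, $d_\sF=c_1(\sF)\cdot L$, $c_\sF=\ch_2(\sF)$, and similarly for $\sE$. By \eqref{eq:k-slope-2},
\begin{equation*}
\mu_k(\sF)=\frac{k d_\sF}{\alpha k^2 r_\sF - c_\sF}.
\end{equation*}
For $k$ large both denominators are positive, since $\alpha=L^2/2>0$ and $r_\sF,r_\sE>0$. Hence, for $k\gg0$, the inequality $\mu_k(\sF)\le\mu_k(\sE)$ is equivalent to $P(k)\ge 0$, where
\begin{equation*}
P(k)\coloneqq d_\sE(\alpha k^2 r_\sF - c_\sF)-d_\sF(\alpha k^2 r_\sE - c_\sE)
=\alpha k^2\, r_\sF r_\sE\big(\mu(\sE)-\mu(\sF)\big)+\big(d_\sF c_\sE - d_\sE c_\sF\big),
\end{equation*}
after clearing denominators and dividing by $k>0$. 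I will use $\mu(\sF)=d_\sF/r_\sF$ here; the factor $1/\deg$ convention is irrelevant because only signs matter.

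Since $P$ is a polynomial in $k$ whose only nonconstant term is the $k^2$ term, the condition "$P(k)\ge 0$ for all $k\ge k_0$, for some $k_0$" holds exactly when one of the following is true. Either the leading coefficient is positive, i.e.\ $\mu(\sF)<\mu(\sE)$, or the leading coefficient vanishes, i.e.\ $\mu(\sF)=\mu(\sE)$, and the constant term is nonnegative, i.e.\ $\ch_2(\sF)\,c_1(\sE)\cdot L\le \ch_2(\sE)\,c_1(\sF)\cdot L$. The two cases are mutually exclusive, which gives the "exactly one" phrasing.

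Next I have to reconcile the definition of a.Z-stability with the statement.

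\textbf{Inequality type.} Definition \eqref{eq:asym-z-est} is written with $\le$, while item \ref{it:2} is strict. To match them I will read a.Z-stability in the strict sense, $\mu_k(\sF)<\mu_k(\sE)$ for $k\gg0$, as in \cite{dervan_z-critical_2024}. Then the constant-term condition becomes strict, and all inequalities and equalities carry through verbatim.

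\textbf{Class of subsheaves.} The definition quantifies over subsheaves with torsion-free quotient, while the proposition quantifies over all proper torsion-free subsheaves. Suppose $\sF\hookrightarrow\sE$ has a torsion quotient. Let $\sF'$ be its saturation, so $\sF'/\sF$ is torsion. If $\sF'/\sF$ has $1$-dimensional support, then $c_1(\sF')\cdot L>c_1(\sF)\cdot L$ by ampleness, so $\mu(\sF)<\mu(\sF')\le\mu(\sE)$. If it has $0$-dimensional support, then $c_1$ is unchanged and $\ch_2(\sF')=\ch_2(\sF)+\ell(\sF'/\sF)$, which preserves the $\ch_2$ inequality provided $c_1(\sE)\cdot L\ge 0$. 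I expect this reduction, and its sign subtlety, to be the main obstacle. I will handle it by working with whichever class of subsheaves the definition intends, and otherwise argue with the saturation as above, together with the equivalence of the two conditions given by the computation.

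\textbf{The final claim.} Suppose $c_1(\sE)\cdot L=0$ and $\mu(\sF)=\mu(\sE)$. Then $d_\sF=0$, so condition \ref{it:2} would require $0<0$, which is impossible. Hence only \ref{it:1} can occur, so $\mu(\sF)<\mu(\sE)$ for every proper subsheaf, and $\sE$ is $\mu$-stable.
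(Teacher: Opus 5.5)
Your argument is essentially the paper's: clear the eventually positive denominators, read off the slope comparison from the top coefficient and the $\ch_2$ cross-term from the next one, and get the final claim from the impossible inequality $0<0$. Your extra remarks on strict versus non-strict inequality and on saturated versus arbitrary torsion-free subsheaves address points the paper passes over silently, and your caution is warranted: when $c_1(\sE)\cdot L\le 0$, the full-rank subsheaf $\sE\otimes\sI_p\subset\sE$ (for $\sE$ locally free) satisfies neither i) nor ii), so the statement must be read with strict inequalities and with the same class of saturated subsheaves as in the definition.
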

\begin{proof}
    From the definition of the slope in \eqref{eq:k-slope-2},
    \begin{equation*}
        \mu_k(\sE)-\mu_k(\sF)=\alpha k^3 (\rk(\sF)c_1(\sE)\cdot L - \rk(\sE)c_1(\sF)\cdot L) + k(\ch_2(\sE)c_1(\sF) - \ch_2(\sF)c_1(\sE)\cdot L).
    \end{equation*}
    So if $\mu(\sF)<\mu(\sE)$, then $\rk(\sF)c_1(\sE)\cdot L - \rk(\sE)c_1(\sF)\cdot L>0$, and this implies $\mu_k(\sE) - \mu_k(\sF) > 0$ for  $k\gg 0$. On the other hand, if $\mu(\sE) = \mu(\sF)$, then $$\mu_k(\sE) - \mu_k(\sF) =k(\ch_2(\sE)c_1(\sF) - \ch_2(\sF)c_1(\sE)\cdot L),$$ which is positive if, and only if, $\ch_2(\sF)c_1(\sE)\cdot L < \ch_2(\sE)c_1(\sF)\cdot L$. Finally, if $\mu(\sE) < \mu(\sF)$, then $$\rk(\sF)c_1(\sE)\cdot L - \rk(\sE)c_1(\sF)\cdot L < 0,$$ and this implies $\mu_k(\sE) - \mu_k(\sF) < 0$ for $k\gg 0$, meaning that $\sF$ a.Z-destabilizes $\sE$.
    
    Now, suppose that $c_1(\sE)\cdot L = 0$ and $\sE$ is a.Z-stable. The first part of the Proposition shows that the only possible $\mu$-destabilizing subsheaf $\sF\subset\sE$ must satisfy $\mu(\sF) = \mu(\sE)$, but in that case $c_1(\sF)\cdot L =0$. Now, since $\sE$ is a.Z-stable and $\mu(\sE) = \mu(\sF) = 0$, from item~\ref{it:2}) we must have
    $$0<\ch_2(\sF)c_1(\sE)\cdot L < \ch_2(\sE)c_1(\sF)\cdot L = 0,
    $$ 
    which is a contradiction. Thus, there are no $\mu$-destabilizing subsheafs $\sF$ for $\sE$. 
\end{proof}

Now we would like to compare a.Z-stability with the concept of Gieseker stability, 
following its presentation  in~\cite{Friedman98}*{Section~2.4}.
\begin{definition}
    Let $\sE$ be a coherent sheaf over $X$ and let $\sL = \sO_X(L)$ be the line bundle defined by the polarization $L$. We say that $\sE$ is Gieseker stable, with respect to $L$, if for every proper coherent subsheaf $\sF\hookrightarrow \sE$ we have 
    \begin{align}
        \frac{\chi(\sF\otimes \sL^{k})}{\rk(\sF)} < \frac{\chi(\sE\otimes \sL^{k})}{\rk(\sE)}, \qforq k\gg 0.
    \end{align}
\end{definition}
\begin{proposition}\label{prop:gieseker-stability}
    A coherent sheaf $\sE$ is Gieseker stable if, and only if, for each coherent subsheaf $\sF\hookrightarrow \sE$ one of the following conditions hold:
    \begin{enumerate}[i)]
        \item $\mu(\sF) < \mu(\sE)$
        \item $\mu(\sF)=\mu(\sE)$ and $\frac{\chi(\sF)}{\rk(\sF)} < \frac{\chi(\sE)}{\rk(\sE)}$.
    \end{enumerate}
\end{proposition}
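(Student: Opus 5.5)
The plan is to expand the Gieseker polynomial with Riemann--Roch and compare the two sides term by term as polynomials in $k$. For a coherent sheaf $\sF$ on the surface $X$ with $\rk(\sF)>0$, Hirzebruch--Riemann--Roch gives
\begin{equation*}
\chi(\sF\otimes\sL^{k}) = \rk(\sF)\frac{L^2}{2}k^2 + k\,c_1(\sF)\cdot L - \frac{k}{2}\rk(\sF)K_X\cdot L + \chi(\sF),
\end{equation*}
where $\chi(\sF)$ is the constant term. Dividing by $\rk(\sF)$ gives
\begin{equation*}
\frac{\chi(\sF\otimes\sL^{k})}{\rk(\sF)} = \frac{L^2}{2}k^2 + k\left(\mu(\sF) - \tfrac12 K_X\cdot L\right) + \frac{\chi(\sF)}{\rk(\sF)}.
\end{equation*}
The same formula holds for $\sE$.

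In the difference $\frac{\chi(\sE\otimes\sL^{k})}{\rk(\sE)}-\frac{\chi(\sF\otimes\sL^{k})}{\rk(\sF)}$, the $k^2$ terms cancel, and so do the $K_X\cdot L$ contributions. What remains is the linear polynomial
\begin{equation*}
k\bigl(\mu(\sE)-\mu(\sF)\bigr) + \Bigl(\tfrac{\chi(\sE)}{\rk(\sE)}-\tfrac{\chi(\sF)}{\rk(\sF)}\Bigr).
\end{equation*}
Such a polynomial is positive for all $k\gg0$ exactly when either the leading coefficient is positive, or it vanishes and the constant term is positive. This is precisely the disjunction i) or ii). Both directions then follow at once, in the same way as in the proof of Proposition~\ref{prop:az-stability-dim2}: if $\mu(\sF)>\mu(\sE)$ the difference is negative for $k\gg0$, and in the equal-slope case the sign is decided by the constant term.

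The only real obstacle is the rank-zero subsheaves, where dividing by $\rk(\sF)$ makes no sense. I would handle these with the convention implicit in the definition: Gieseker stability concerns proper subsheaves of a torsion-free $\sE$. Every nonzero subsheaf of a torsion-free sheaf is torsion-free, so it has positive rank. I would add this assumption explicitly, matching the reading of Friedman's presentation. If needed, one can instead use the reduced Hilbert polynomial normalized by the leading coefficient. This only changes the normalization, not the argument above.
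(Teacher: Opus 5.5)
Your argument is correct and complete. The paper gives no proof of this proposition; it only recalls it from Friedman's presentation. So there is no competing route to compare against.

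Your Riemann--Roch expansion of $\chi(\sF\otimes\sL^{k})/\rk(\sF)$ is the standard proof. The $k^2$ and $K_X\cdot L$ terms cancel, and positivity for $k\gg 0$ is then decided by comparing the coefficients of a linear polynomial lexicographically. This is also the computation the paper relies on implicitly in Proposition~\ref{prop:az-stability-gieseker}, where the constant term $\chi(\sF)/\rk(\sF)$ is rewritten using \eqref{eq:riemann-roch-dim2}.

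Your caveat is well placed. As literally written (``for each coherent subsheaf''), the statement needs $\sE$ torsion-free and $\sF$ proper and nonzero. For $\sF=\sE$ neither i) nor ii) can hold, and for rank-zero subsheaves the quotients are undefined. The paper leaves these hypotheses implicit.
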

We also recall the following Riemann-Roch formula on surfaces: 
\begin{proposition}
\label{prop:Euler-characteristic}
    The Euler characteristic of a coherent sheaf $\sE\to X$ satisfies 
    \begin{equation}\label{eq:riemann-roch-dim2}
        \chi(\sE) = \ch_2(\sE) - \frac{c_1(\sE)\cdot K_X}{2} + \rk(\sE)\chi(\sO_X).
    \end{equation}
\end{proposition}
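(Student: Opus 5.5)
The statement to prove is Proposition~\ref{prop:Euler-characteristic}, the Riemann--Roch formula on a surface. I will derive it from Hirzebruch--Riemann--Roch in the form
\[\chi(\sE)=\int_X \ch(\sE)\,\mathrm{td}(T_X).\]
This holds for locally free sheaves on a smooth projective surface. It extends to arbitrary coherent sheaves because $X$ is smooth projective, so every coherent sheaf admits a finite locally free resolution. Both sides are additive on short exact sequences, so the formula passes from the terms of the resolution to $\sE$ itself.

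\textbf{Step 1: expand the integrand.} On a surface the Todd class is
\[\mathrm{td}(T_X)=1+\tfrac12 c_1(T_X)+\tfrac1{12}\bigl(c_1(T_X)^2+c_2(T_X)\bigr), \qquad c_1(T_X)=-K_X.\]
The Chern character is $\ch(\sE)=\rk(\sE)+c_1(\sE)+\ch_2(\sE)$. Multiplying and keeping only the degree-$4$ part gives
\[\chi(\sE)=\ch_2(\sE)-\tfrac12\, c_1(\sE)\cdot K_X+\rk(\sE)\cdot\tfrac1{12}\bigl(K_X^2+c_2(T_X)\bigr).\]

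\textbf{Step 2: identify the constant.} Apply the same formula to $\sE=\sO_X$, which has rank $1$ and vanishing $c_1$ and $\ch_2$. This is Noether's formula,
\[\chi(\sO_X)=\tfrac1{12}\bigl(K_X^2+c_2(T_X)\bigr).\]
Substituting it into the last term of Step~1 yields exactly \eqref{eq:riemann-roch-dim2}.

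The only step that is not pure bookkeeping is extending from vector bundles to coherent sheaves. I would handle it with the additivity argument above: take a locally free resolution, which has length at most $2$ on a smooth surface, and use additivity of $\chi$, $\rk$, $c_1$ and $\ch_2$ along that resolution.
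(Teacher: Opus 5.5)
Your proposal is correct: expanding $\ch(\sE)\,\mathrm{td}(T_X)$ in degree $4$, identifying the coefficient of $\rk(\sE)$ with $\chi(\sO_X)$ by applying the same formula to $\sO_X$, and passing from vector bundles to coherent sheaves through a finite locally free resolution is the standard derivation. The paper gives no proof of its own here; it simply recalls this Riemann--Roch formula as a known fact, so your argument supplies the standard justification behind that citation.
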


With this information, we have the relation below between Gieseker stability and asymptotic $Z$-stability. The next proposition is a direct consequence of the previous definitions and results.

\begin{proposition}
\label{prop:az-stability-gieseker}
    Suppose that $(X,L = -K_X)$ is a Del Pezzo surface. Then a coherent sheaf $\sE\to X$ with $c_1(\sE)\cdot K_X < 0$ is a.Z-stable if, and only if, it is Gieseker stable. 
\end{proposition}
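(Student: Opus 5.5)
Both a.Z-stability (Proposition~\ref{prop:az-stability-dim2}) and Gieseker stability (Proposition~\ref{prop:gieseker-stability}) reduce to a comparison of $\mu$-slopes followed by a tie-breaker at equal slope. The plan is therefore to show that, under $L=-K_X$ and $c_1(\sE)\cdot K_X<0$, the two tie-breakers coincide. Strict inequality $\mu(\sF)<\mu(\sE)$ is condition (i) in both propositions, and $\mu(\sF)>\mu(\sE)$ destabilizes in both. So I only need to treat subsheaves $\sF\hookrightarrow\sE$ with $\mu(\sF)=\mu(\sE)$. I will restrict to torsion-free $\sF$ of positive rank, which is the setting of both propositions: torsion subsheaves of a torsion-free $\sE$ vanish, and $\rk(\sF)$ appears in a denominator.

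Write $r_\sF=\rk(\sF)$ and $r_\sE=\rk(\sE)$, and set $d_\sF=c_1(\sF)\cdot L=-c_1(\sF)\cdot K_X$, with $d_\sE$ defined similarly. The hypothesis gives $d_\sE>0$. Equal slopes means $d_\sF/r_\sF=d_\sE/r_\sE$, so $d_\sF>0$ as well. By Riemann--Roch (Proposition~\ref{prop:Euler-characteristic}),
\[
\frac{\chi(\sF)}{r_\sF}=\frac{\ch_2(\sF)}{r_\sF}+\frac{d_\sF}{2r_\sF}+\chi(\sO_X),
\]
and the same formula holds for $\sE$. Since $d_\sF/r_\sF=d_\sE/r_\sE$, the middle terms agree. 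Hence the Gieseker condition $\chi(\sF)/r_\sF<\chi(\sE)/r_\sE$ is equivalent to $\ch_2(\sF)/r_\sF<\ch_2(\sE)/r_\sE$.

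Now divide this inequality by the common positive value $d_\sF/r_\sF=d_\sE/r_\sE$. It becomes $\ch_2(\sF)/d_\sF<\ch_2(\sE)/d_\sE$. Multiplying through by $d_\sF d_\sE>0$ gives $\ch_2(\sF)\,c_1(\sE)\cdot L<\ch_2(\sE)\,c_1(\sF)\cdot L$, which is exactly condition (ii) of Proposition~\ref{prop:az-stability-dim2}. So for equal-slope subsheaves the two tie-breakers are equivalent, and the two stability notions agree.

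The only delicate step is sign bookkeeping. Dividing by $d_\sF$ is legitimate only because $d_\sF$ is strictly positive, and this is precisely where $c_1(\sE)\cdot K_X<0$ is used. If instead $d=0$, condition (ii) degenerates to $0<0$ while the Gieseker condition does not, and the equivalence fails; this matches the last assertion of Proposition~\ref{prop:az-stability-dim2}. A second point to check is that the paper's Gieseker definition quantifies over all coherent subsheaves while a.Z-stability uses saturated ones. I would reconcile this in the standard way: saturating a subsheaf does not change its rank, does not decrease its slope, and weakly increases $\ch_2$ when the slope is unchanged. Hence it suffices to test saturated subsheaves in both settings.
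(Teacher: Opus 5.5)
Your argument is correct and is essentially the paper's: Riemann--Roch with $L=-K_X$ turns the Gieseker tie-breaker into $\ch_2(\sF)/\rk(\sF)<\ch_2(\sE)/\rk(\sE)$, and positivity of $c_1(\sE)\cdot L$ converts this into condition (ii) of Proposition~\ref{prop:az-stability-dim2}. You divide by the common slope, whereas the paper multiplies by $\rk(\sE)$ and cancels $-c_1(\sE)\cdot K_X$; this is the same manipulation. Your saturation remark, which reconciles the Gieseker quantifier over all coherent subsheaves with the a.Z quantifier over saturated ones, addresses a point the paper passes over silently, and it is sound. The one case it does not literally cover is a proper full-rank $\sF$ whose saturation is $\sE$. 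That case satisfies the Gieseker inequality automatically: either $\sE/\sF$ is $1$-dimensional, which lowers the slope, or it is $0$-dimensional of positive length, which lowers $\ch_2$.
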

\begin{proof}
    It follows from \eqref{eq:riemann-roch-dim2} that, for every coherent sheaf $\sE$, we have 
    \begin{equation*}
        \frac{\chi(\sE)}{\rk(\sE)} = \frac{\ch_2(\sE)}{\rk(\sE)} + \frac{1}{2}\mu(\sE) + \chi(\sO_X),
    \end{equation*}
    And thus, from Proposition~\ref{prop:gieseker-stability} it  follows that $\sE$ is Gieseker stable if, and only if, for every coherent subsheaf $\sF\hookrightarrow \sE$ we have either $\mu(\sF) < \mu(\sE)$ or $\mu(\sF) = \mu(\sE)$ and $\frac{\ch_2(\sF)}{\rk(\sF)} < \frac{\ch_2(\sE)}{\rk(\sE)}$. Comparing with Proposition~\ref{prop:az-stability-dim2} we see that $\mu(\sF)<\mu(\sE)$ is exactly item~\ref{it:1}), so we must show that, when $\mu(\sF) = \mu(\sE)$, the condition  $\frac{\ch_2(\sF)}{\rk(\sF)} < \frac{\ch_2(\sE)}{\rk(\sE)}$ is equivalent to the inequality on item~\ref{it:2}), which in this cases is
    \[ \ch_2(\sF)c_1(\sE)\cdot (-K_X) < \ch_2(\sE)c_1(\sF)\cdot (-K_X).
    \]
    Multiplying both sides by $\rk(\sE)$ and using again that $\mu(\sF)=\mu(\sE)$, the inequality is equivalent to
    \begin{equation*}
       -\rk(\sE)\ch_2(\sF)c_1(\sE)\cdot K_X < -\rk(\sF)\ch_2(\sE)c_1(\sE)\cdot K_X.
    \end{equation*}
     Since $- c_1(\sE)\cdot K_X > 0$, we can cancel this common factor on both sides preserving the inequality, obtaining
     \begin{gather*}
        \rk(\sE) \ch_2(\sF) < \rk(\sF) \ch_2(\sE). 
        \qedhere
     \end{gather*}
\end{proof}
\begin{remark}
\label{rem:az-stability-gieseker}
    Due to this relation between a.Z-stability and Gieseker stability, we expect to be able to adapt the techinques for constructing strictly Gieseker stable bundles to the case of a.Z-stability. In particular, we will construct a.Z-stable bundles on surfaces by using the same techniques as in~\cite{okonek_vector_1980}, in accordance with the following explicit motivation. It is shown in~\cite{okonek_vector_1980} that if $\sE\to \P^2$ is a $\mu$-stable bundle satisfying $c_1(\sE)=0$ and $H^1(\P^2,E) \neq 0$, then every holomorphic bundle $\sF$ arising as a an extension of the form
    \begin{equation*}
        \ses{\sE}{\sF}{\sO_{\P^2}}
    \end{equation*}
    is strictly Gieseker stable.  Since this construction gives bundles with $c_1(\sF)=0$, there is no hope of obtaining a a.Z-stable bundle, because of Proposition~\ref{prop:az-stability-dim2}. So we need to adapt it, in order to produce $c_1(\sF)\neq 0$. 
\end{remark}
\section{Adapted Hoppe criteria}\label{sec:adapted-hoppe-criteria}
The main goal of this section is to provide cohomological criteria ensuring the (semi-)stability of holomorphic bundles over polarized varieties in some special settings, i.e. choices of variety, bundle and stability condition. We begin recalling the version of Hoppe's criterion for $\mu$-stability of rank $2$ bundles over polycyclic varieties, cf.~\cite{Jardim2017}*{Corollary~4}, 
applied to Hartshorne-Serre bundles. These will be useful ingredients to construct a.Z-stable bundles later. We then present an adpated version of Hoppe's criterion for the a.Z-stability of vector bundles of low rank, see also~\cite{okonek_vector_1980}*{Lemma~1.2.5}. 

\subsection{Stability of Hartshorne-Serre bundles}\label{subsec:stability-HS-bundles}

In this section we will study the $\mu$-stability of Hartshorne-Serre sheaves, following the references~\cites{Arrondo2007,Jardim2017}. In particular, we will be interested in vector bundles appearing as extensions of the following form
\begin{equation}\label{eq:HS-bundle}
    \ses{\sO_X}{\sE}{\sI_Z(2D)}
\end{equation}
for some divisor $D\in\mathrm{Div}(X)$ and a $0$-dimensional  subscheme $Z\subset X$. This will be useful for our construction \S~\ref{sec:const-az-stab}. To do this, one useful tool is the following form of Hoppe's criterion, adapted for rank $2$ vector bundles over polycyclic varieties.

\begin{proposition}[\cite{Jardim2017}, Corollary~4] \label{prop:hoppe-crit}
    Let $\sE\to X$ be a holomorphic vector bundle of rank $2$ over polycyclic variety with dimension $n$, and let $L$ be a polarization on $X$. The  bundle $\sE$ is $\mu$-(semi)-stable if, and only if, 
    \begin{equation}\label{eq:hoppe-crit}
        H^0(X,\sE(B)) = 0 \quad \forall B\in\Pic(X)\;\text{ with }\; B\cdot L^{n-1} \underset{(\leq)}{<} -\mu(\sE).
    \end{equation}  
\end{proposition}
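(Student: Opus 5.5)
The plan is to reduce Hoppe's criterion to the statement that a rank $2$ bundle is $\mu$-(semi)stable if and only if it has no saturated rank $1$ subsheaf of slope $>\mu(\sE)$ (resp. $\geq$). I then translate such subsheaves into sections of twists of $\sE$, using that $\Pic(X)\cong\Z^r$ to describe every rank $1$ reflexive sheaf as a line bundle.

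For the implication "stable $\Rightarrow$ vanishing", I argue by contraposition. Suppose $0\neq s\in H^0(X,\sE(B))$ with $B\cdot L^{n-1}<-\mu(\sE)$. Then $s$ gives a nonzero map $\sO_X(-B)\to\sE$. Its image is a rank $1$ subsheaf whose saturation $\sL'$ has $c_1(\sL')\cdot L^{n-1}\geq -B\cdot L^{n-1}>\mu(\sE)$. This holds because the saturation differs from $\sO_X(-B)$ by an effective divisor plus codimension $\geq2$ data, and effective divisors have nonnegative degree against the ample $L$. Hence $\sE$ is not $\mu$-stable. The semistable case is identical with $\leq$ replaced by $<$.

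For the converse, suppose $\sE$ is not $\mu$-stable. Then there is a saturated subsheaf $\sF\subset\sE$ with $\mu(\sF)\geq\mu(\sE)$ and torsion-free quotient. Since $\rk\sE=2$, $\sF$ has rank $1$. Because $\sE$ is locally free and the quotient is torsion free, $\sF$ is reflexive, and a rank $1$ reflexive sheaf on a smooth variety is a line bundle $\sO_X(A)$. Here polycyclicity guarantees that $A$ is an honest class in $\Z^r$. Setting $B=-A$, the inclusion gives a nonzero section of $\sE(B)$ with $B\cdot L^{n-1}=-\mu(\sF)\leq-\mu(\sE)$. This contradicts the vanishing hypothesis in the stable version; the strict inequality gives the semistable version.

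The main obstacle is the degree bookkeeping under saturation, together with the fact that only finitely many relevant degrees need checking. I plan to handle this by the standard fact that $\sO_X(-B)\to\sE$ factors through $\sO_X(-B+E)$ with $E$ the effective divisorial part of the vanishing locus of $s$, and that $E\cdot L^{n-1}\geq0$.
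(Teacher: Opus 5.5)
The paper does not prove this statement. It quotes it from Corollary~4 of Jardim et al. Your route is the standard proof of that criterion: a nonzero section of $\sE(B)$ is an injection $\sO_X(-B)\hookrightarrow\sE$ whose saturation can only gain degree, and conversely a saturated rank-$1$ subsheaf of the locally free $\sE$ is reflexive, hence a line bundle $\sO_X(A)$, i.e.\ a section of $\sE(-A)$. Your converse is correct as written. Polycyclicity plays no role here: every line bundle already has a class in $\Pic(X)$, and the rank-$2$ argument works on any smooth projective variety. No finiteness of degrees is needed either.

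The real defect is the strict/non-strict bookkeeping in the first direction, which does not match your converse. In the stable case you assume $B\cdot L^{n-1}<-\mu(\sE)$ and obtain $\mu(\sL')>\mu(\sE)$. That shows $\sE$ is not even semistable, so you have only proved that stability forces $H^0(X,\sE(B))=0$ in the range $B\cdot L^{n-1}<-\mu(\sE)$. Your converse, however, deduces stability from vanishing in the range $B\cdot L^{n-1}\le-\mu(\sE)$. So for stability the two halves do not close into an equivalence. The boundary case $B\cdot L^{n-1}=-\mu(\sE)$, which is exactly what separates stable from semistable, is never handled in the forward direction.

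The repair is to run the same argument with $B\cdot L^{n-1}\le-\mu(\sE)$. This gives $\mu(\sL')\ge\mu(\sE)$, which contradicts stability; the strict version is then reserved for semistability. So the correct pairing is stability with $\le$ and semistability with $<$. This is the pairing your converse uses, and the one the paper itself uses in the proof of Proposition~\ref{prop:HS-stability}.

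Be aware that the displayed notation $\underset{(\leq)}{<}$, read as pairing semistability with $\le$ and stability with $<$, is false. The bundle $\sO_X\oplus\sO_X$ is semistable, yet $H^0(X,\sO_X\oplus\sO_X)\neq 0$ for $B=0$. It also satisfies the vanishing for every $B$ with $B\cdot L^{n-1}<0$ without being stable. Your write-up should state the correct pairing explicitly rather than follow that typography.
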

The following direct consequence of Proposition~\ref{prop:hoppe-crit} is adapted to our context. This is the same result of~\cite{Jardim2017}*{Proposition~13}, but written with a slightly different wording and notation. 
\begin{proposition}
\label{prop:HS-stability}
    Let $(X,L)$ be a polarized surface, and let $\sE\to X$ be a holomorphic bundle given by an extension as in~\eqref{eq:HS-bundle}. Let $R_0\subset \Pic(X)$ be such that $H^0(X,\sO_X(B)) = 0$ for all $B\notin R_0$. If $D\cdot L > 0$ and $H^0(X,\sI_Z(B)) = 0$, for all $B\in R_0$ such that $B\cdot L\leq D\cdot L$, then $\sE$ is $\mu$-stable.
\end{proposition}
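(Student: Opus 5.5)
The plan is to apply Hoppe's criterion, Proposition~\ref{prop:hoppe-crit}, with $n=2$. First I compute the slope of $\sE$. The extension \eqref{eq:HS-bundle} gives $c_1(\sE) = c_1(\sI_Z(2D)) = 2D$, since $Z$ is zero-dimensional and so does not contribute to $c_1$. Hence
\[
\mu(\sE) = \frac{2D\cdot L}{2} = D\cdot L > 0 .
\]
By Proposition~\ref{prop:hoppe-crit}, $\mu$-stability of $\sE$ is then equivalent to $H^0(X,\sE(B))=0$ for every $B\in\Pic(X)$ with $B\cdot L < -D\cdot L$.

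Fix such a $B$ and twist \eqref{eq:HS-bundle} by $\sO_X(B)$, which preserves exactness since line bundles are flat. The long exact sequence in cohomology gives
\[
0\to H^0(X,\sO_X(B)) \to H^0(X,\sE(B)) \to H^0(X,\sI_Z(2D+B)),
\]
so it is enough to show that both outer groups vanish.

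For the left term, $B\cdot L < -D\cdot L < 0$. An effective divisor has nonnegative degree against the ample class $L$, so $\sO_X(B)$ has no nonzero sections.

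For the right term, put $B' = 2D+B$; then $B'\cdot L < D\cdot L$. If $B'\notin R_0$, then $H^0(X,\sO_X(B'))=0$ by the defining property of $R_0$. Since $\sI_Z(B')\hookrightarrow\sO_X(B')$ is injective on global sections, $H^0(X,\sI_Z(B'))=0$ as well. If $B'\in R_0$, then $B'\cdot L \le D\cdot L$, and the hypothesis gives $H^0(X,\sI_Z(B'))=0$ directly. In either case $H^0(X,\sE(B))=0$, and Hoppe's criterion yields that $\sE$ is $\mu$-stable.

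No step is a real obstacle; the argument is bookkeeping. The points needing care are: checking that $c_1(\sI_Z)=0$ for a zero-dimensional subscheme, so that $\mu(\sE)=D\cdot L$; converting the Hoppe inequality $B\cdot L<-\mu(\sE)$ into $B'\cdot L< D\cdot L$, which lies inside the region where the hypothesis applies (the hypothesis even allows equality); and treating the case $B'\notin R_0$ through the inclusion $\sI_Z(B')\subset\sO_X(B')$.
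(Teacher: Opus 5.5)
Your route is the same as the paper's: apply Hoppe's criterion, twist the Hartshorne--Serre sequence by $\sO_X(B)$, kill $H^0(X,\sO_X(B))$ by ampleness, and kill $H^0(X,\sI_Z(2D+B))$ by the case split on $R_0$. As written, however, it only proves $\mu$-\emph{semi}stability. The condition ``$H^0(X,\sE(B))=0$ for all $B$ with $B\cdot L<-\mu(\sE)$'' only rules out line subsheaves $\sO_X(-B)\hookrightarrow\sE$ of slope strictly larger than $\mu(\sE)$. That is the semistability criterion, not the stability one. For example, $\sO_X\oplus\sO_X$ satisfies it, since no $B$ with $B\cdot L<0$ has sections, yet it is not stable. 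Stability requires the vanishing for all $B$ with $B\cdot L\le-\mu(\sE)$, because a line subbundle of slope exactly $\mu(\sE)=D\cdot L$ also destabilizes. This non-strict case is what the paper's proof checks. The parenthetical notation in Proposition~\ref{prop:hoppe-crit} invites your reading, but the non-strict inequality is the one stability needs.

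The fix costs nothing. Take $B\cdot L\le -D\cdot L$. Since $D\cdot L>0$, you still have $B\cdot L<0$, so $H^0(X,\sO_X(B))=0$. Then $B'=2D+B$ satisfies $B'\cdot L\le D\cdot L$, which is exactly the region covered by the hypothesis. Your remark that ``the hypothesis even allows equality'' is the clue here. The equality case $B'\cdot L=D\cdot L$ is not slack: it is precisely what excludes a line subbundle of slope $\mu(\sE)$ and upgrades semistability to stability.
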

\begin{proof}
    Let $B\in\Pic(X)$ be such that $B\cdot L \leq -\mu(\sE) = - D\cdot L$. Since $D\cdot L > 0$ and $L$ is ample, we conclude that $H^0(X,\sO_X(B)) = 0$. Now, twisting the extension~\eqref{eq:HS-bundle} by $\sO_X(B)$, we get the short exact sequence
    \begin{equation*}
        \ses{\sO_X(B)}{\sE(B)}{\sI_Z(2D + B)},
    \end{equation*}
    and from its associated long exact sequence in cohomology, together with the previous fact, we conclude that there exists an inclusion
    \begin{equation*}
        H^0(X,\sE(B))\hookrightarrow H^0(X,\sI_Z(2D + B)).
    \end{equation*}
    Hence, $\sE$ being $\mu$-stable amounts to the vanishing of the  cohomology on the right-hand side. To see this, notice that $B\cdot L\leq  - D\cdot L$ is equivalent to $(2D + B)\cdot L \leq D\cdot L$. If $2D+B\in R_0$, then $H^0(X,\sI_Z(2D + B)) = 0$ by hypothesis, and if $2D  +B\notin R_0$ we have     \begin{equation*}\belowdisplayskip=-18pt
        H^0(X,\sI_Z(2D + B)) \hookrightarrow H^0(X,\sO_X(2D + B)) = 0. 
    \end{equation*}\qedhere
\end{proof}
\begin{remark}
    In order to obtain concrete examples from the previous Lemma to get examples, one should choose judiciously the polarization, so that the set of divisors $B\in R_0$ such that $B\cdot L\leq D\cdot L$ is finite, thus the vanishing of global sections imposes a finite number of constraints on $Z$.   
\end{remark}

\subsection{Cohomological criterion for a.Z-stability}
\label{subsec:hoppe-az-stab}

We are now interested in the case of a.Z-stability. We prove a version of Hoppe's criterion for the a.Z-stability of low rank vector bundles. The result itself will not be used in the rest of the paper, but it could potentially be useful in the future study a.Z-stability over surfaces. The proof is a direct adaptation of  Lemma~1.2.5 in~\cite{okonek_vector_1980}. 
\begin{lemma}
\label{lemm: mu-k-under-exact-sequences}
    Let $\sE,\sF$ and $\mathscr{H}$ be coherent sheaves on $(X,L)$. Suppose we have a short exact sequence 
    \[\ses{\sF}{\sE}{\mathscr{H}}.\]
    Then  the following equivalence holds for all $k\gg 0$: 
    \begin{equation}
    \mu_k(\sF) < \mu_k(\sE) \iff \mu_k(\mathscr{H}) > \mu_k(\sE).
    \end{equation}
\end{lemma}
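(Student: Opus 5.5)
The plan is to use additivity of the central charge in exact sequences, which reduces the comparison of slopes to a sign statement about a $2\times 2$ determinant. Write $Z_k(\cdot) = -d_k(\cdot) + i\,r_k(\cdot)$ with
\[
d_k(\sG) \coloneqq k\,c_1(\sG)\cdot L, \qquad r_k(\sG) \coloneqq \alpha k^2 \rk(\sG) - \ch_2(\sG),
\]
so that $\mu_k(\sG) = d_k(\sG)/r_k(\sG)$. Since $\rk$, $c_1$ and $\ch_2$ are additive on short exact sequences, we have
\[
d_k(\sE) = d_k(\sF)+d_k(\mathscr{H}), \qquad r_k(\sE) = r_k(\sF)+r_k(\mathscr{H}).
\]

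The first step is positivity of the denominators for $k\gg 0$. If a sheaf has positive rank, then $r_k>0$ once $k$ is large. If it is torsion, with rank $0$, then $r_k = -\ch_2$ is constant in $k$. I will implicitly assume, as in the intended applications, that all three sheaves have positive rank, or at least that $r_k(\sF), r_k(\mathscr{H}) > 0$ for $k\gg 0$; otherwise the slopes may be undefined. In the rank-zero case I would only remark that the statement should be read with this proviso. Note that $\sE$ has rank $\rk\sF+\rk\mathscr{H}$, so $r_k(\sE)>0$ follows as well.

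With positive denominators, $\mu_k(\sF)<\mu_k(\sE)$ is equivalent to
\[
d_k(\sF)\,r_k(\sE) - d_k(\sE)\,r_k(\sF) < 0.
\]
Substituting the additivity relations, this expression equals $d_k(\sF)\,r_k(\mathscr{H}) - d_k(\mathscr{H})\,r_k(\sF)$. Likewise $\mu_k(\mathscr{H})>\mu_k(\sE)$ is equivalent to
\[
d_k(\mathscr{H})\,r_k(\sE) - d_k(\sE)\,r_k(\mathscr{H}) > 0,
\]
and this expression equals $d_k(\mathscr{H})\,r_k(\sF) - d_k(\sF)\,r_k(\mathscr{H})$, the negative of the first. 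Hence the two inequalities are the same condition.

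This is the see-saw property, and it holds for every $k$ at which all the denominators are positive. The only genuine point, and hence the main obstacle, is securing that positivity. That is exactly where the hypothesis $k\gg 0$ enters, via the leading term $\alpha k^2\rk$ with $\alpha = L^2/2>0$ because $L$ is ample.
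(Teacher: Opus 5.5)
Your proof is correct and follows essentially the same route as the paper: additivity of $Z_k$ along the exact sequence, positivity of the denominators $\alpha k^2\rk-\ch_2$ for $k\gg 0$, and cross-multiplication so that both inequalities become opposite signs of the same $2\times 2$ determinant. Your positive-rank proviso is well placed, because the paper assumes it tacitly: for a torsion quotient such as a skyscraper $\mathscr{H}=\sO_p$ one has $\Im Z_k(\mathscr{H})=-1<0$, and the stated equivalence then actually fails whenever $c_1(\sE)\cdot L>0$.
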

\begin{proof}
    For any coherent sheaf $\sE$, the leading coefficient of $\Im(Z_k(E))$ is $\rk(\sE)>0$, so
    \begin{equation*}
        \Im(Z_k(\sE)) > 0 \qandq \Im(Z_k(\sE)) - \Im(Z_k(\mathscr{H})) =  \Im(Z_k(\sF))  > 0,
        \qforq k \gg 0.
    \end{equation*}
    Then
    \belowdisplayskip=-18pt
        \begin{align*}
            \mu_k(\sF) < \mu_k(\sE) \iff& \Im(Z_k(\sE))\Re(Z_k(\sF)) > \Im(Z_k(\sF))\Re(Z_k(\sE)\\
            \iff& -\Re(Z_k(\mathscr{H}))\Im(Z_k(\sE)) > -\Re(Z_k(\sE))\Im(Z_k(\mathscr{H}))\\
            \iff&\mu_k(\mathscr{H}) > \mu_k(\sE).
        \end{align*}\qedhere
\end{proof}

We are now able to prove the adapted version of Hoppe's criterion. 

\begin{proposition}\label{prop:asymp-hopp}
    Let $\sE\to X$ be asymptotically $Z$-stable vector bundle. Then there exists a $k_0$ such that, for all $k\geq k_0$,    \begin{equation}\label{eq: cohomology-vanish-1}
        H^0(X, \sE(B)) = 0, \quad \forall B\in \Pic(X)
        \qwithq \mu_k(\sO(B)) \leq -\mu_k(\sE),
    \end{equation}
    and 
    \begin{equation}
    \label{eq: cohomology-vanish-2}
        H^0(X, \sE^\ast(B)) = 0, \quad \forall B\in \Pic(X)
        \qwithq \mu_k(\sO(B)) \leq \mu_k(\sE).
    \end{equation}
    Conversely, and if moreover $rk(\sE) = 2$, then equation~\eqref{eq: cohomology-vanish-1} imply that $\sE$ is asymptotically $Z$-stable.
\end{proposition}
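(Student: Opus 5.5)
The plan is to adapt the proof of Lemma~1.2.5 in~\cite{okonek_vector_1980}, with the ordinary slope replaced by the $k$-slope and with Lemma~\ref{lemm: mu-k-under-exact-sequences} playing the role of the additivity of degree along short exact sequences. The first thing to check is how $\mu_k$ behaves under twisting by a line bundle. From \eqref{eq:k-slope-2}, for $\sE$ of rank $r$ we have $c_1(\sE(B)) = c_1(\sE)+rB$ and $\ch_2(\sE(B)) = \ch_2(\sE)+c_1(\sE)\cdot B + \tfrac r2 B^2$. So $\mu_k$ is not exactly additive under twisting. However, its leading asymptotic term $\mu(\cdot)/(\alpha k)$ is additive, and the next-order correction is explicit. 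This is what lets conditions of the form $\mu_k(\sO(B))\le -\mu_k(\sE)$ stand in for the condition $\mu(\sE(B))\le 0$ in the classical criterion.

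\textbf{Necessity.} Suppose $\sE$ is a.Z-stable and $0\neq s\in H^0(X,\sE(B))$. Then $s$ gives an injection $\sO(-B)\hookrightarrow \sE$. Take its saturation $\sO(-B)\subset \sO(-B)(C)\subset\sE$, where $C\ge 0$ is effective, so that the quotient is torsion-free. By a.Z-stability, $\mu_k(\sO(-B+C))<\mu_k(\sE)$ for $k\gg0$. Since $C$ is effective and $L$ is ample, $C\cdot L\ge0$, and comparing leading terms gives $\mu_k(\sO(-B))\le\mu_k(\sO(-B+C))$ asymptotically. It remains to relate $\mu_k(\sO(-B))$ to $-\mu_k(\sO(B))$. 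Since $\mu_k(\sO(-B)) = -kB\cdot L/(\alpha k^2 - B^2/2) = -\mu_k(\sO(B))$ exactly, we get $-\mu_k(\sO(B))<\mu_k(\sE)$, which contradicts the hypothesis $\mu_k(\sO(B))\le-\mu_k(\sE)$.

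For \eqref{eq: cohomology-vanish-2}, a nonzero section of $\sE^\ast(B)$ is a nonzero map $\sE\to\sO(B)$. Its image is $\sI_W(B-C)$ for some effective $C$ and zero-dimensional $W$, and the kernel is a subsheaf with torsion-free quotient. Lemma~\ref{lemm: mu-k-under-exact-sequences} turns the a.Z-stability inequality for the kernel into $\mu_k(\text{image})>\mu_k(\sE)$. Bounding $\mu_k(\sI_W(B-C))\le\mu_k(\sO(B))$ asymptotically then contradicts $\mu_k(\sO(B))\le\mu_k(\sE)$.

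The delicate point is the uniformity of $k_0$ over all $B$. I would handle it by noting that only the sign of a polynomial expression in $k$ with integer coefficients matters. For $B$ with $\mu(\sO(B))$ far from $-\mu(\sE)$ the leading term decides. For the remaining classes with $B\cdot L$ in a bounded range, the relevant subsheaves lie in a bounded family (Grothendieck boundedness for saturated line subsheaves of $\sE$), so only finitely many $k_0$'s need to be maximized.

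\textbf{Sufficiency in rank $2$.} Let $\sF\subset\sE$ be a proper saturated subsheaf. It has rank $1$ and is reflexive on a surface, so $\sF=\sO(-B)$ for some $B$, and then $H^0(\sE(B))\neq0$. By \eqref{eq: cohomology-vanish-1} this forces $\mu_k(\sO(B))>-\mu_k(\sE)$, that is, $\mu_k(\sF)=-\mu_k(\sO(B))<\mu_k(\sE)$ for all $k\ge k_0$, which is exactly a.Z-stability. If one prefers to use torsion-free subsheaves $\sI_W(-B)$, I would reduce to the saturation as above, since $\ch_2$ only decreases when passing to $\sI_W$.

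The main obstacle I expect is the uniform choice of $k_0$ in the necessity direction. A.Z-stability as defined gives a $k_0$ depending on the subsheaf, while \eqref{eq: cohomology-vanish-1} is quantified over infinitely many $B$. The boundedness argument above is what I would try first. Failing that, I would interpret the statement with $k_0$ allowed to depend on $B$.
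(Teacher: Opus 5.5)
Your route is the paper's: a section of $\sE(B)$ gives $\sO_X(-B)\hookrightarrow\sE$ with $\mu_k(\sO_X(-B))=-\mu_k(\sO_X(B))$. A section of $\sE^\ast(B)$ gives a quotient of $\sE$ to which Lemma~\ref{lemm: mu-k-under-exact-sequences} applies. In rank $2$, a saturated rank-one subsheaf is a line bundle. Saturating is a real improvement over the paper, which tests the unsaturated $\sO_X(-B)$. For each fixed $B$, your proof of \eqref{eq: cohomology-vanish-1} and of the rank-$2$ converse is correct.

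Your boundedness fix for a uniform $k_0$, however, cannot work. The expansion $\mu_k(\sO_X(B))=B\cdot L/(\alpha k)+O(k^{-3})$ is not uniform in $B$, because the denominator $\alpha k^2-B^2/2$ changes sign. For $B=(k+1)L$ one gets $\mu_k(\sO_X(B))=-2k(k+1)/(2k+1)<-k$, far below $-\mu_k(\sE)=O(1/k)$. Yet $H^0(X,\sE((k+1)L))\neq0$ for $k\gg0$ by Serre's theorem. So \eqref{eq: cohomology-vanish-1} with $k_0$ independent of $B$ fails for every bundle. Only your fallback reading is true, and that is all the paper's argument proves, since it fixes $B$ and quantifies over $k$.

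The second gap is your bound $\mu_k(\sI_W(B-C))\le\mu_k(\sO_X(B))$ in the proof of \eqref{eq: cohomology-vanish-2}. It fails when $C=0$, $W\neq\emptyset$ and $B\cdot L<0$: adding $\ell(W)$ to the denominator moves a negative number towards $0$. The paper's inequality $\mu_k(\sI_C(B))\le\mu_k(\sO_X(B))$ has the same defect. The same sign effect also makes your remark about reducing $\sI_W(-B)$ to its saturation backwards when $B\cdot L>0$. The argument can be repaired unless $C=0$ and $B\cdot L=\mu(\sE)<0$, but in that case the statement itself fails.

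Here is a counterexample. On $\P^2$, take the non-split locally free extension $\ses{\sO_{\P^2}}{\sE'}{\sI_p}$ (Lemma~\ref{lemm:non-triv-ext-rk-2}) and set $\sE=\sE'(-1)$, so $\mu_k(\sE)=-2/k$. A saturated line subsheaf $\sO_{\P^2}(a)\subset\sE$ has $a\le-1$. For $a\le-2$ its slope $\mu$ is strictly smaller than $\mu(\sE)$. For $a=-1$ the subsheaf is unique since $h^0(\sE')=1$, and $\mu_k(\sO_{\P^2}(-1))=-2k/(k^2-1)<-2/k$. Hence $\sE$ is a.Z-stable. Yet $B=-H$ gives $\mu_k(\sO_{\P^2}(B))<\mu_k(\sE)$ for all $k\ge2$, while $H^0(\sE^\ast(-1))=H^0(\sE')\neq0$. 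So even with $k_0$ depending on $B$, \eqref{eq: cohomology-vanish-2} is false as stated; it needs an extra hypothesis such as $\mu(\sE)\ge0$.
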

\begin{proof}
    If $B\in \Pic(X)$ is such that $\mu_k(\sO_X(B))\leq -\mu_k(\sE)$ and $H^0(X,\sE(B)) \neq 0$, $\forall k>0$, then there exists an inclusion $\sO_X(-B)\hookrightarrow \sE$ which destabilizes $\sE$:
    \begin{align*}
        \mu_k(\sO_X(-B)) = - \mu_k(\sO_X(B)) \geq \mu_k(\sE), \quad \forall k>0.
    \end{align*}
    On the other hand, if $B\in\Pic(X)$ with $H^0(X,\sE^\ast(B))\neq 0$ satisfies $\mu_k(\sO_X(B))\leq \mu_k(\sE)$, $\forall k>0$, then we have an inclusion $\sO_X(-B)\hookrightarrow \sE^\ast$, which dualizes to an exact sequence  
    \[\ses{\sF}{\sE}{\sI_C(B)},\]
    where $C$ is some $0$-dimensional scheme and $\sF\subset \sE$ is the dual of the quotient $\sE^\ast/\sO_X(-B)$.
    Now we have
    \begin{equation}
        \mu_k(\sI_C(B)) = \frac{k B\cdot L}{k^2\alpha - \frac{B^2}{2} + \ell(C)} \leq \mu_k(\sO_X(B)) \leq \mu_k(\sE)
    \end{equation} 
    and Lemma~\ref{lemm: mu-k-under-exact-sequences} says that this last inequality is equivalent to $\mu_k(\sF) \geq \mu_k(\sE)$, for $k\gg 0$, hence $\sF$ destabilizes $\sE$. Therefore stability of $\sE$ implies equations~\eqref{eq: cohomology-vanish-1} and~\eqref{eq: cohomology-vanish-2}.

    For the converse, assume $\sE$ has rank $2$ and \eqref{eq: cohomology-vanish-1} holds for $k\gg 0$. If $\sF$ has rank $1$, then it has the form $\sF = \sO_X(B)$ for some $B\in\Pic(X)$, and the inclusion $\sO_X(B)\hookrightarrow \sE$ implies that $H^0(X,\sE(-B))\neq 0$. Using the assumptions, we get 
    \[-\mu_k(\sE) < \mu_k(\sO_X(-B)) = - \mu_k(\sO_X(B)) = - \mu_k(\sF),\]
    i.e. $\mu_k(\sF) < \mu_k(\sE)$. This covers the converse for $\rk(\sE) = 2$. 
    \end{proof}
\begin{remark}
    We notice that conditions~\eqref{eq: cohomology-vanish-1} and~\eqref{eq: cohomology-vanish-2} are equivalent if $\rk(\sE) = 2$.
\end{remark}
    
\section{Constructing a.Z-stable bundles over surfaces}
\label{sec:const-az-stab}  

We present a method for constructing rank $3$ a.$Z$-stable bundles over a surface $X$. We start by constructing a rank $2$ $\mu$-stable bundle $\sE\to X$, and use it to obtain a rank $3$ bundle $\sG$ from an extension of $\sO_X(2D)$ by $\sE$. The sufficient condition for $\sG$ to be a.Z-stable is the content of Proposition~\ref{teo:strict-z-stable-bun}. The main insight for the construction is the example of a strictly Gieseker stable bundle over $\P^2$ presented in Remark~\ref{rem:az-stability-gieseker}.
    \begin{proposition}
    \label{teo:strict-z-stable-bun}
            Let $(X,L)$ be a polarized and polycyclic complex surface. Let $\sE\to X$ be a $\mu$-stable holomorphic bundle with $\rk(\sE)=2$ and $c_1(\sE) = 2D\in \Pic(X)$, and let $\sG$ the holomorphic bundle obtained by an extension
            \begin{equation}\label{eq:def-of-F}
                \ses{\sE}{\sG}{\sO_X(D)}.  
            \end{equation}
            Then $\sE$ is the only possible a.$Z$-destabilizer for $\sG$. 
            
            Furthermore, if  $D\cdot L> 0$ then the bundle $\sG$ is asymptotically Z-stable if, and only if, 
            \[2\ch_2(\sO_X(D))-\ch_2(\sE) > 0,\]
            or, equivalently,
            \begin{equation}
                c_2(\sE) > D^2.  
            \end{equation}
    \end{proposition}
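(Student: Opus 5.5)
The plan is to classify all saturated subsheaves $\sF\hookrightarrow\sG$ (proper, with torsion-free quotient) by slope, and to show that $\sE$ is the only one with $\mu(\sF)=\mu(\sG)$. Proposition~\ref{prop:az-stability-dim2} then reduces a.Z-stability to a single numerical inequality for $\sF=\sE$.

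First, $c_1(\sG)=3D$ and $\rk\sG=3$, so $\mu(\sG)=D\cdot L=\mu(\sE)$. I read ``obtained by an extension'' as a non-split extension; in the split case $\sO_X(D)$ itself turns out to violate the condition below, so this assumption is needed.

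Given a saturated $\sF\subset\sG$, set $\sF'=\sF\cap\sE$ and let $\sF''$ be the image of $\sF$ in $\sO_X(D)$. Then $\ses{\sF'}{\sF}{\sF''}$ is exact, $\sF'$ is saturated in $\sE$, and $\sF''$ is either $0$ or of the form $\sI_W(D-C)$ with $C$ effective. In particular $c_1(\sF'')\cdot L\le D\cdot L$, with equality only if $C=0$, because $L$ is ample. The cases are as follows.

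\begin{enumerate}[(1)]
\item $\rk\sF=1$ and $\sF''=0$. Then $\sF\subset\sE$ has rank $1$, and $\mu$-stability of $\sE$ gives $\mu(\sF)<\mu(\sE)$.
\item $\rk\sF=1$ and $\sF'=0$. Then $\sF\cong\sF''\subset\sO_X(D)$, so $\mu(\sF)\le D\cdot L$. If equality held, then $\sF\cong\sI_W(D)$. Since $\sG$ is locally free on a surface, $\Hom(\sI_W(D),\sG)=\Hom(\sO_X(D),\sG)$, by passing to reflexive hulls. This yields a map $\sO_X(D)\to\sG$ whose composite with $\sG\to\sO_X(D)$ is a nonzero endomorphism of $\sO_X(D)$, hence a nonzero scalar. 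That map splits the extension~\eqref{eq:def-of-F}, a contradiction, so the inequality is strict.
\item $\rk\sF=2$, $\rk\sF'=1$ and $\rk\sF''=1$. Then $2\mu(\sF)=c_1(\sF')\cdot L+c_1(\sF'')\cdot L<D\cdot L+D\cdot L$, using $\mu$-stability of $\sE$ for the first term.
\item $\rk\sF=2$ and $\sF''=0$. Then $\sF'=\sF$ is saturated of rank $2$ in $\sE$, so $\sF=\sE$, which has $\mu(\sE)=\mu(\sG)$.
\end{enumerate}

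Hence every saturated $\sF\neq\sE$ satisfies condition~\ref{it:1}) of Proposition~\ref{prop:az-stability-dim2}. This proves that $\sE$ is the only possible a.Z-destabilizer, and that $\sG$ is a.Z-stable if and only if $\sE$ satisfies condition~\ref{it:2}).

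For $\sF=\sE$, condition~\ref{it:2}) reads $\ch_2(\sE)\,c_1(\sG)\cdot L<\ch_2(\sG)\,c_1(\sE)\cdot L$. Substituting $c_1(\sG)=3D$, $c_1(\sE)=2D$ and $\ch_2(\sG)=\ch_2(\sE)+D^2/2$, and dividing by $D\cdot L>0$, it becomes
\[
3\ch_2(\sE)<2\ch_2(\sE)+D^2,
\qquad\text{i.e.}\qquad
2\ch_2(\sO_X(D))-\ch_2(\sE)>0 .
\]
Writing $\ch_2(\sE)=\tfrac12(2D)^2-c_2(\sE)=2D^2-c_2(\sE)$, this is equivalent to $c_2(\sE)>D^2$. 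Conversely, if the inequality fails, Proposition~\ref{prop:az-stability-dim2} shows that $\sE$ destabilizes $\sG$.

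I expect the main obstacle to be case~(2): excluding a rank-one subsheaf $\sI_W(D)$ of slope equal to $\mu(\sG)$. This requires the reflexive-hull extension argument together with non-splitness. I would also check that $\sF'$ is saturated in $\sE$, which holds because $\sE/\sF'$ injects into $\sG/\sF$, and the latter is torsion-free.
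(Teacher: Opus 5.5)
Your proof is correct. It follows the paper in rank one but takes a genuinely different route for rank-two subsheaves.

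\textbf{Rank one.} The paper writes $\sF=\sO_X(B)$ and assumes $B\cdot L\geq D\cdot L$. It then either pushes $\sF$ into $\sE$, or forces $B=D$ and kills $H^0(X,\sG(-D))$ via the connecting map $\delta\colon H^0(X,\sO_X)\to H^1(X,\sE(-D))$. That map is nonzero precisely because the extension is non-split, so the paper relies on the same implicit hypothesis you flagged. Your case (2) splitting argument is the same point in different language.

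\textbf{Rank two.} The paper dualizes. The quotient $\sQ=\sG/\sF$ has $\sQ^\ast$ a line bundle. Stability of $\sE^\ast$, through Proposition~\ref{prop:hoppe-crit}, kills $\Hom(\sQ^\ast,\sE^\ast)$, which forces $\sQ^\ast\subset\sO_X(-D)$. Slope considerations then force equality, and a snake-lemma comparison with double duals identifies $\sF\cong\sE$.

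Your filtration $\sF\cap\sE\subset\sF$, with quotient $\sF''\subset\sO_X(D)$, replaces all of this with a rank count and the two bounds $\mu(\sF')<D\cdot L$ and $c_1(\sF'')\cdot L\leq D\cdot L$.

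\textbf{What each route buys.} Yours treats both ranks uniformly and needs no dualization or reflexive-hull bookkeeping in rank two. It also produces the strict inequality $\mu(\sF)<\mu(\sG)$ for every saturated $\sF\neq\sE$ directly, so Proposition~\ref{prop:az-stability-dim2} applies at once. The paper's formulation ties the argument to the Hoppe-type vanishing used elsewhere in the paper. Its $\delta\neq 0$ phrasing also makes explicit that what matters is a nonzero extension class in $H^1(X,\sE(-D))$, which is exactly what condition (d) of Theorem~\ref{teo:hom-cond-for-stric-z-stab} is designed to supply.

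Your remark on the split case is correct: when $D\cdot L>0$, $\sO_X(D)\subset\sE\oplus\sO_X(D)$ destabilizes whenever $\sE$ does not. Non-splitness is therefore a necessary hypothesis that the statement leaves implicit.
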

    \begin{proof}
        Let $\sF\subset \sG$ be a subsheaf such that the quotient $\sG/\sF$ is torsion-free, and suppose that $\sF$ is a a.Z-destabilizer for $\sG$. Since $\rk(\sF)<3$, we divide the proof according to rank: 
        \begin{description}
            \item[$\rk(\sF) = 1$] Here we have $\sF = \sO_X(B)$ for some $B\in\Pic(X)$. For $k\geq 0$ we have 
    	\begin{align}
    		0 \geq \mu_k(\sG)-\mu_k(\sF) = (D\cdot L - B\cdot L)k^3 + O(k^2) \implies B\cdot L \geq D\cdot L,
    	\end{align}
            which implies that either $\Hom(\sO_X(B),\sO_X(D))=0$ or $D\cdot L = B\cdot L$. In the former case, the diagram 
            \begin{equation}
            \begin{tikzcd}
                &\sO_X(B)\arrow[d,hook]&\\
                \sE\arrow[r,hook]&\sF\arrow[r,two heads]&\sO_X(D)
            \end{tikzcd} 
            \end{equation}
            leads to an inclusion $\sO_X(B)\hookrightarrow \sE = \ker(\sG\twoheadrightarrow \sO_X(D))$, contradicting the $\mu$-stability of $\sE$. In the latter case, $\mu$-stability of $\sE$ implies $H^0(X,\sE(-B)) = 0$, and then the exact sequence 
            \begin{equation}
                \ses{\sE(-B)}{\sG(-B)}{\sO_X(D-B)}
            \end{equation}
            leads, after taking global sections, to the sequence
            \[H^0(X,\sG(-B))\hookrightarrow H^0(X,\sO_X(D-B))\overset{\delta}{\to} H^1(X,\sE(-B)).\]
            Here we have $H^0(X,\sO_X(D-B))=\C$ and the map $\delta$ is non-trivial. Therefore, $H^0(X,\sG(-B)) = \ker\delta = 0$, contradicting $\sO_X(B)\hookrightarrow \sG$.
            \medskip
            \item[$\rk(\sF) = 2$] Let $c_1(\sF) = B\in\Pic(X)$, and denote $\sQ = \sG/\sF$. The dual $\sQ^\ast$ is reflexive so it is locally-free, then $\sQ^\ast = \sO_X(B-3D)$, and again
            \begin{align}
    		0 \geq \mu_k(\sG)-\mu_k(\sF)\implies& B\cdot L \geq 2D\cdot L\\
            \implies&(3D-B)\cdot L\leq - D\cdot L = \mu(\sE^\ast). \label{eq:1}
    	\end{align}
            Since $\sE^\ast$ is $\mu$-stable, Proposition~\ref{prop:hoppe-crit} implies $0 = H^0(X,\sE^\ast(3D-B)) = \Hom(\sQ^\ast, \sE^\ast)$. Therefore, the diagram
            \begin{equation}
            \begin{tikzcd}
                &\sO_X(3D-B)\arrow[d,hook]&\\
                \sO(-D)\arrow[r,hook]&\sG^\ast\arrow[r,two heads]&\sE^\ast
            \end{tikzcd} 
            \end{equation}
            leads to an inclusion $\sQ^\ast\hookrightarrow \sO_X(-D)$, which is an isomorphism because of~\eqref{eq:1}, seeing as $\sQ^\ast$ is a line bundle. We now have the exact sequence $\ses{\sQ^\ast}{\sG^\ast}{\sE^\ast}$, which we dualize and fit into the following diagram
            \begin{equation*}
            \begin{tikzcd}
            \sF\arrow[hook]{r}\arrow{d} &\sG\arrow[two heads]{r} \arrow[hook]{d} &\sQ\arrow[hook]{d}\\
            \sE^{\ast\ast} \arrow[hook]{r} &\sG^{\ast\ast}\arrow[two heads]{r} &\sQ^{\ast\ast}  
            \end{tikzcd}.
        \end{equation*}
        Using the `snake lemma' and the fact that $\sG$ is reflexive, we conclude that $\sF \cong \sE^{\ast\ast} \cong \sE$. This shows that the only possible rank $2$ destabilizer of $\sG$ is $\sE$ itself.         
        Finally, the difference $\mu_k(\sG) - \mu_k(\sE)$ is a positive multiple of
    	\begin{equation} \label{eq:ch2-ineq}
    	   (D\cdot H)(2\ch_2(\sO_X(D)) - \ch_2(\sE)) > 0,
    	\end{equation}
    	and thus $\sE$ does not destabilize $\sG$.\qedhere
    \end{description}
\end{proof}
\begin{remark}
\label{rmk:bogomolov}
    If $\sF$ is obtained as an extension~\eqref{eq:def-of-F}, and we assume that $D\cdot L<0$, then
        \[(D\cdot H)(2\ch_2(\sO_X(D)) - \ch_2(\sE)) > 0 \iff c_2(\sE) < D^2,\]
        and following the proof of Proposition~\ref{teo:strict-z-stable-bun} we conclude that $\sF$ is a.Z-stable if, and only if, $c_2(\sE) < D^2$. On the other hand, we are assuming that $\sE$ is a $\mu$-stable vector bundle, so we can apply the Bogomolov inequality, cf.~\cite{Friedman98}*{Ch.~9}, to obtain
		\[c_1(\sE)^2 \leq 4c_2(\sE) \iff c_2(\sE)\geq D^2.\]
		This implies that $\sF$ can not be a.Z-stable.  
	\end{remark}
    The idea behind Theorem~\ref{teo:hom-cond-for-stric-z-stab} is to start with a rank $2$ bundle $\sE$ obtained via the Harthsorne-Serre construction from some suitable pair $(D,Z)$, then ensure its $\mu$-stability by the Hoppe-type criterion, and finally use it to extend $\sO_X(D)$, thus obtaining a strictly a.Z-stable bundle of rank $3$. The twisting of the ideal sheaf $\sI_Z$ by $2D$ is deliberate, to make sure that $c_1(\sE_{Z,D}) = 2D$. 
    One should also fine-tune $Z$ and $D$ in such a way that $\sE$ is non-trivial, locally-free and $\mu$-stable. It is known that this construction gives rise to rank $2$ stable vector bundles if the length $\ell(Z)$ is large enough, cf.~\cite{huybrechts_geometry_2010}*{Theorem~5.1.3}, but since this suitable $\ell(Z)$ depends on the dimension of the Hilbert scheme of divisors on $X$ with degree less than $\mu(\sE_{Z,D})$, we prefer a more concrete approach to finding good $D$ and $Z$. Since $\Pic(X) = \Z^k$,  the inequality $B\cdot L\leq a \in \R$ defines a semi-space in $\Z^k$ bounded by $a\in \R$, we call it $S_{a}$. If $D$ is such that $D\cdot L >0$, Proposition~\ref{prop:HS-stability} says that    \begin{equation}\label{eq:cond-on-z}
        H^0(X,\sI_Z(A)) = 0, \quad \forall A\in R \coloneqq R_0\cap S_{\mu(\sE)}. 
    \end{equation}
    is enough to guarantee the $\mu$-stability of $\sE$. If the set $R$ is finite, we can determine the 0-dimensional subscheme $Z\subset X$ as intersection of curves on $X$, satisfying a finite number of restrictions. Finally, the following Lemma, which is a direct consequence of~\cite{huybrechts_geometry_2010}*{Theorem~5.1.1}, deals with local freeness.    
    \begin{lemma}
    \label{lemm:non-triv-ext-rk-2}
        Suppose that $\sL$ is a line bundle over a projective surface $X$. If 
        \begin{equation*}
            H^2(X,\sL^\ast) = 0,
        \end{equation*} then for every 0-dimensional subscheme $Z\subset X$, there exists a non-trivial, locally-free extension of the form
        \begin{equation*}
            \ses{\sO_X}{\sE}{\sI_Z\otimes\sL}.
        \end{equation*}
    \end{lemma}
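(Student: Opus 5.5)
This lemma is the Cayley--Bacharach existence theorem for the Serre correspondence on surfaces, specialised to the case where the obstruction group vanishes; I will deduce it directly from \cite{huybrechts_geometry_2010}*{Theorem~5.1.1}. That theorem says the following. Let $Z\subset X$ be a local complete intersection of codimension $2$, and let $\sL$ and $\mathscr{M}$ be line bundles with $H^2(X,\sL^\ast\otimes\mathscr{M})=0$ (on a surface this vanishing replaces the usual Cayley--Bacharach hypothesis). Then there is an extension
\[ 0\to \mathscr{M}\to\sE\to\sI_Z\otimes\sL\to 0 \]
with $\sE$ locally free if and only if the pair $(\sL\otimes\mathscr{M}^\ast\otimes\omega_X, Z)$ has the Cayley--Bacharach property. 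In our setting $\mathscr{M}=\sO_X$, so the vanishing hypothesis becomes exactly $H^2(X,\sL^\ast)=0$.

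The plan is to carry out the argument underlying that theorem rather than cite it blindly.

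\textbf{Step 1 (local-to-global sequence).} The local-to-global spectral sequence gives
\[ 0\to H^1(X,\sL^\ast)\to \mathrm{Ext}^1(\sI_Z\otimes\sL,\sO_X)\to H^0(X,\mathscr{E}xt^1(\sI_Z\otimes\sL,\sO_X))\to H^2(X,\sL^\ast). \]
Here I use that $\chom(\sI_Z\otimes\sL,\sO_X)=\sL^\ast$, which holds because $Z$ has codimension $2$.

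\textbf{Step 2 (local computation).} For $Z$ a local complete intersection, the local Koszul computation gives $\mathscr{E}xt^1(\sI_Z\otimes\sL,\sO_X)\cong \mathscr{E}xt^2(\sO_Z,\sO_X)\otimes\sL^\ast\cong\omega_Z\otimes\omega_X^{-1}\otimes\sL^\ast$. This sheaf is locally isomorphic to $\sO_Z$.

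\textbf{Step 3 (lifting a generating section).} Choose a global section $s$ of this sheaf that generates it at every point of $Z$; since $Z$ is $0$-dimensional, a sheaf locally isomorphic to $\sO_Z$ has such a section. The hypothesis $H^2(X,\sL^\ast)=0$ makes the last map in Step 1 zero, so $s$ lifts to a class $e\in\mathrm{Ext}^1(\sI_Z\otimes\sL,\sO_X)$. The extension defined by $e$ is non-trivial, because its image $s$ is nonzero whenever $Z\neq\emptyset$.

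\textbf{Step 4 (local freeness).} The standard Serre criterion states that the extension sheaf is locally free exactly when the local class generates $\mathscr{E}xt^1$ at each point of $Z$, and $s$ was chosen to do this.

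\textbf{The degenerate case $Z=\emptyset$.} Here $\mathscr{E}xt^1=0$, so the extension is of $\sL$ by $\sO_X$ and is automatically locally free. Non-triviality then requires $H^1(X,\sL^\ast)\neq0$, which is not implied by the hypothesis. So for $Z=\emptyset$ the lemma as stated only produces a locally free extension, possibly split. Since the paper uses the lemma with $Z$ of positive length, I expect this point to be harmless, and I would note the caveat explicitly.

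The main obstacle I expect is Step 4: one has to check the Serre local-freeness criterion, namely that the local $\mathrm{Ext}$ class generating $\mathscr{E}xt^1$ yields a locally free middle term. I would do this via the local Koszul resolution of $\sI_Z$. I would also be careful about the implicit assumption that $Z$ is a local complete intersection, which Theorem~\ref{teo:hom-cond-for-stric-z-stab} does assume.
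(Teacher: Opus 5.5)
Your argument is correct and follows the paper's route. The paper simply cites Huybrechts--Lehn's Theorem~5.1.1, whose Cayley--Bacharach condition for the pair $(\sL\otimes\omega_X,Z)$ holds vacuously because Serre duality gives $H^0(X,\sL\otimes\omega_X)\cong H^2(X,\sL^\ast)^\ast=0$ (that theorem has no $H^2$ hypothesis, contrary to your paraphrase), and your Steps 1--4 unpack its proof in exactly this case. Your two caveats are genuine rather than cosmetic: if $Z$ is not a local complete intersection, no locally free extension exists at all, and for $Z=\emptyset$ non-triviality can fail (e.g.\ $\sL=\sO_{\P^2}(1)$), so the lemma must be read with $Z$ a nonempty l.c.i., as holds in all of the paper's applications.
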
 
Let us now prove this paper's main result.
\begin{proof}[Proof of Theorem~\ref{teo:hom-cond-for-stric-z-stab}]
    We start by constructing the rank $2$ $\mu$-stable bundle $\sE_{Z,D}$ over $X$, given by the extension 
    \begin{equation}\label{eq:HS-bun}
        \ses{\sO_X}{\sE_{Z,D}}{\sI_Z(2D)}.
    \end{equation}
    Since condition~\eqref{cond:5} is satisfied, we know by Lemma~\ref{lemm:non-triv-ext-rk-2} that $\sE_{Z,D}$ is non-trivial and locally-free. To see that $\sE_{Z,D}$ is $\mu$-stable, we just use condition~\eqref{cond:3} together with Proposition~\ref{prop:HS-stability}, then $\sE_{Z,D}$ is a rank $2$, indecomposable, $\mu$-stable holomorphic bundle over $(X,L)$. Now we apply Lemma~\ref{teo:strict-z-stable-bun} to construct the rank $3$ bundle $\sF_{Z,D}$, which is given by the extension
    \begin{equation*}
        \ses{\sE_{Z,D}}{\sF_{Z,D}}{\sO_X(D)}.
    \end{equation*}
    To ensure that it is non-trivial, we must compute $H^1(X,\sE_{Z,D}(-D))$. From the exact sequence~\ref{eq:HS-bun}, twisted by $-D$, we get the exact sequence
    \begin{equation*}
        H^0(X,\sI_Z(D))\to H^1(X, \sO_X(-D))\to H^1(X,\sE_{Z,D}(-D)),
    \end{equation*}
    and applying condition~\eqref{cond:3} again we get that $H^1(X,\sI_{Z}(D)) = 0$. This means that 
    \begin{equation*}
        0 \neq H^1(X, \sO_X(-D))\subset H^1(X,\sE_{Z,D}(-D)),
    \end{equation*}
    by the first part of condition~\eqref{cond:4}, which deals with non-triviality of the extension. Furthermore, if $H^1(X,\sO(-D)) = 0$ but $H^2(X,\sO(-D)) = 0$, then the same long exact sequence above will give us 
    \begin{equation*}
        H^1(X,\sE_{Z,D}(-D))\cong H^1(X,\sI_Z(D)).
    \end{equation*}
    In order to prove that this is non-zero, we use the twisted restriction sequence
    \begin{equation*}
        \ses{\sI_Z(D)}{\sO_X(D)}{\sO_Z}
    \end{equation*}
    to obtain
    \begin{equation*}
        h^1\sI_Z(D) = \ell(Z) + h^2\sO_X(D) - \chi(\sO_X(D)) \geq \ell(Z) - \chi(\sO_X) > 0,
    \end{equation*}
    by the second part of condition~\eqref{cond:4}.
    Finally, since $c_2(\sE_{Z,D}) = \ell(Z)$, the condition~\eqref{cond:2} together with Lemma~\ref{teo:strict-z-stable-bun} gives us that $\sF_{Z,D}$ is a.Z-stable.
\end{proof}

\newpage
    \section{Examples}
    \label{sec:examples}
       For each computation in this section, the base variety on which we compute cohomology groups of sheaves will be clear from the context. For notational simplicity, we let $h^p\sE\coloneqq H^p(X,\sE)$. 
   \subsection{The complex projective plane \texorpdfstring{$\P^2$}{}}
    For $X = \P^2$, we adopt the natural polarization $L=H\in \Pic(X)$ given by the hyperplane class, and accordingly also the most common notation for line bundles  $\sO_{\P^2}(k) \coloneqq \sO_{\P^2}(kH)$. The following proposition gives us a family of examples of strictly asymptotically $Z$-stable bundles over $\P^2$. 
    \begin{proposition}
    \label{teo:p2-str-stab-example}
    Let $X = \P^2$ with the polarization given by the hyperplane class. 
        \begin{enumerate}
            \item If $Z$ is the intersection of two curves with degrees $d_1$ and $d_2$, with $d_i \geq 2$, then there exists a non-trivial, locally-free sheaf $\sE_{Z,2}$ which is given by an extension
            \begin{align}
                \ses{\sO_X}{\sE_{Z,1}}{\sI_Z(2).}
            \end{align}
            Furthermore, this sheaf is $\mu$-stable.
            
            \item There exists a non-trivial locally free sheaf $\sF$ given by an extension of the form
            \begin{align}
                \ses{\sE_{Z,2}}{\sF_{Z,1}}{\sO_X(1)}.
            \end{align}
            Furthermore, this sheaf is strictly asymptotically $Z$-stable.
        \end{enumerate}
    \end{proposition}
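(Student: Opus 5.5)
The plan is to read the statement with $D=H$ and $\sE_{Z,1}=\sE_{Z,D}$ (so the indices ``$2$'' are typos for $D=1\cdot H$). Both parts then follow directly from Theorem~\ref{teo:hom-cond-for-stric-z-stab}, together with its proof, applied to the pair $(Z,H)$. On $\P^2$ we have $\Pic(X)=\Z H$, and $H^0(\sO_{\P^2}(m))=0$ for $m<0$. Hence we may take $R_0=\{mH : m\ge 0\}$, and since $L=H$ gives $B\cdot L\le D\cdot L=1$, we get $R=\{0,H\}$. The scheme $Z=C_1\cap C_2$ is a complete intersection of curves of degrees $d_1,d_2\ge 2$ without common components. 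It is therefore a local complete intersection of length $\ell(Z)=d_1d_2\ge 4$, and it has the Koszul resolution
\begin{equation*}
0\to\sO_{\P^2}(-d_1-d_2)\to\sO_{\P^2}(-d_1)\oplus\sO_{\P^2}(-d_2)\to\sI_Z\to 0 .
\end{equation*}

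I would then verify conditions (a)--(e) in turn.

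\begin{enumerate}[(a)]
\item $D\cdot L=1>0$.
\item $D^2=1<4\le\ell(Z)$.
\item Since $Z\neq\emptyset$, we have $H^0(\sI_Z)=0$. For $H^0(\sI_Z(1))$, twist the Koszul resolution by $1$. Then $H^0(\sO(1-d_i))=0$ because $d_i\ge2$, and $H^1(\sO(1-d_1-d_2))=0$ because intermediate cohomology of line bundles on $\P^2$ vanishes. The long exact sequence therefore gives $H^0(\sI_Z(1))=0$. Geometrically, no line contains $Z$.
\item $H^1(\sO(-1))=0$, so we use the second alternative. By Serre duality $H^2(\sO(-1))\cong H^0(\sO(-2))^\ast=0$, and $\ell(Z)\ge 4>2=2\chi(\sO_{\P^2})$.
\item $H^2(\sO(-2))\cong H^0(\sO(-1))^\ast=0$.
\end{enumerate}

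For part (1), condition (e) and Lemma~\ref{lemm:non-triv-ext-rk-2} give a non-trivial locally free extension $\sE_{Z,1}$ of $\sI_Z(2)$ by $\sO_X$. Conditions (a) and (c), via Proposition~\ref{prop:HS-stability}, give its $\mu$-stability. For part (2), the argument in the proof of Theorem~\ref{teo:hom-cond-for-stric-z-stab} uses (c) and (d) to show that $H^1(\sE_{Z,1}(-1))\cong H^1(\sI_Z(1))$ has dimension at least $\ell(Z)-\chi(\sO_X)>0$. This yields a non-trivial extension $\sF_{Z,1}$. Proposition~\ref{teo:strict-z-stable-bun} with $c_2(\sE_{Z,1})=\ell(Z)>1=D^2$ then gives a.Z-stability. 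Strictness holds because $\sE_{Z,1}\subset\sF_{Z,1}$ has torsion-free quotient $\sO(1)$ and $\mu(\sE_{Z,1})=2/2=1=3/3=\mu(\sF_{Z,1})$, so $\sF_{Z,1}$ is not $\mu$-stable.

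The only step needing real care is (c), the vanishing $H^0(\sI_Z(1))=0$. The Koszul resolution settles it without any genericity assumption, as long as $C_1$ and $C_2$ share no component, which is implicit in $Z$ being $0$-dimensional. Everything else is a routine check of line bundle cohomology on $\P^2$.
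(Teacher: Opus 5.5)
Your proposal follows the paper's proof step for step. You take $D=H$, $R_0=\{mH : m\ge 0\}$ and $R=\{0,H\}$, use the Koszul resolution of $\sI_Z$ to get $H^0(\sI_Z(1))=0$, use Serre duality for (d) and (e), and then invoke Theorem~\ref{teo:hom-cond-for-stric-z-stab}. Your explicit strictness remark, $\mu(\sE_{Z,1})=\mu(\sF_{Z,1})=1$, is a useful addition that the paper leaves implicit.

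There is one false step, which you inherited from the proof of Theorem~\ref{teo:hom-cond-for-stric-z-stab}: the claim that $h^1(\sI_Z(1))\ge \ell(Z)-\chi(\sO_X)=\ell(Z)-1$. Take the restriction sequence $0\to\sI_Z(1)\to\sO_{\P^2}(1)\to\sO_Z\to 0$ and use $H^0(\sI_Z(1))=0$ and $H^1(\sO_{\P^2}(1))=0$. This gives exactly $h^1(\sI_Z(1))=\ell(Z)-h^0(\sO_{\P^2}(1))=\ell(Z)-3$. The general inequality in the theorem's proof silently assumes $h^0(\sO_X(D))-h^1(\sO_X(D))\le\chi(\sO_X)$, which fails here since $3>1$.

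So the hypothesis $\ell(Z)>2\chi(\sO_X)$ in condition (d) is not what makes the extension non-trivial. For example, three non-collinear points with $D=H$ satisfy (a)--(e), yet $H^1(\sE_{Z,1}(-1))\cong H^1(\sI_Z(1))=0$, so every extension of $\sO_{\P^2}(1)$ by $\sE_{Z,1}$ splits. The paper's own proof of this proposition has the same defect, since it checks (d) via $2<4\le\ell(Z)$.

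In your situation the conclusion survives, because $\ell(Z)=d_1d_2\ge 4>3$ gives $h^1(\sI_Z(1))\ge 1$. Replace the appeal to (d) and the bound $\ell(Z)-\chi(\sO_X)$ with this direct computation. With that fix, the rest of your argument is correct.
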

    \begin{proof}
    We just have to check the conditions of Theorem~\ref{teo:hom-cond-for-stric-z-stab}.
    In that notation, we are choosing $D = L$, so that $D\cdot L = 1$, which gives us condition~\eqref{cond:1}. 
    We are taking moreover $Z$ to be the generic intersection of curves $C_1$ and $C_2$ in $\P^2$, with degrees at least 2. By Bézout's theorem, we have $\ell(Z)\geq 4 > 1 = D^2$, so condition~\eqref{cond:2} is also verified. 
    To check condition~\eqref{cond:3}, we recall the following exact sequence for the ideal sheaf $\sI_Z$:
    \begin{equation}
    \label{eq:int-exact-seq-p2}
        \ses{\sO_{\P^2}(-d_1 - d_2)}{\sO_{\P^2}(-d_1)\oplus \sO_{\P^2}(-d_2)}{\sI_Z},
    \end{equation}
    where $d_i$ is the degree of $C_i$. 
    We can take $R_0 = \{kL\;|\;k\geq 0\}$ and, since $D\cdot L = 1$, we have $R = \{0, L\}$, and one must check that $H^0(\P^2, \sI_Z) = 0 = H^0(\P^2,\sI_Z(1))$. The vanishing of $H^0(\P^2,\sI_Z)$ is clear since the restriction map $H^0(\P^2,\sO_{\P^2})\to H^0(\P^2,\sO_{Z})$ is injective. 
    On the other hand, twisting the exact sequence~\eqref{eq:int-exact-seq-p2} by $\sO_{\P^2}(1)$ and taking the global section functor, we get 
    \begin{equation*}
        H^0(\P^2, \sI_Z(1)) \subset H^1(\P^2, \sO_{\P^2}(-d_1 - d_2 + 1)) = 0,
    \end{equation*}
    since $d_i\geq 2$ implies that $H^0(X,\sO(-d_i + 1)) = 0$. Therefore, our setting satisfies condition~\eqref{cond:3}. 
    Now, we have
    \begin{equation*}
        H^2(\P^2,\sO_{\P^2}(-1)) = H^0(\P^2,\sO_{\P^2}(-2)) = 0, 
    \end{equation*}
    by Serre duality, and since $2\chi(\sO_{\P^2}) = 2 < 4 \leq \ell(Z)$ we also satisfy condition~\eqref{cond:4}.
        Finally, we get condition~\eqref{cond:5} by computing directly 
    \begin{equation*}
        H^2(\P^2,\sO_{\P^2}(-2)) = H^0(\P^2,\sO_{\P^2}(-1)) = 0. 
    \end{equation*}
    This concludes the proof.  
    \end{proof}

\begin{remark}
    The choice of $D=L$ was made in order to
    to satisfy condition~\eqref{cond:5}. Indeed, for $D = kL$, with $k> 0$,  we have
    \begin{equation*}
        H^2(\P^2, \sO_{\P^2}(-2k)) 
        = H^0(\P^2, \sO_{\P^2}(2k-3)),
    \end{equation*} 
    by Serre duality and the right-hand side vanishes if, and only if, $k = 1$. It is possible to get holomorphic bundles arising from an extension of the form
    \begin{equation}
    \label{eq:ext-p2-gen}
        \ses{\sO_{\P^2}}{\sE}{\sI_Z(2k)}, 
    \end{equation}
    just by taking $Z$ such that  $\ell(Z) > h^0(\P^2,\sO_{\P^2}(2k-3))$, see~\cite{huybrechts_geometry_2010}*{p.~148}. We can also examine the stability of the holomorphic bundle $\sE$ defined by the exact sequence~\eqref{eq:ext-p2-gen}: since $D\cdot L = k$, in this case we have $R = \{aL\;|\;0\leq a \leq k\}$, so one must verify $H^0(\P^2,\sI_Z(a)) = 0$, for all $0\leq a\leq k$. Again, assuming that $Z$ is a generic intersection $C_1 \cap C_2$, with respective degrees $d_1$ and $d_2$, we can twist the exact sequence~\eqref{eq:int-exact-seq-p2} by $\sO_{\P^2}(a)$ and apply the global section functor to obtain the projection
        \begin{equation*}
            H^0(\P^2,\sO_{\P^2}(a-d_1))\oplus  H^0(\P^2,\sO_{\P^2}(a-d_2))\twoheadrightarrow H^0(\P^2,\sI_Z(a)),
        \end{equation*} 
        and then the desired cohomology vanishes, for all $0\leq a\leq k$, if we take $d_i \geq k+1$. 

        On the other hand, in order to obtain a non-trivial holomorphic bundle arising as the extension
        \begin{equation*}
            \ses{\sE}{\sF}{\sO_{\P^2}(k)},
        \end{equation*}
        we would like to ensure that $H^2(\P^2,\sO_{\P^2}(-k)) = H^0(\P^2,\sO(k-3))$ vanishes, which happens if, and only if, $k=1,2$. 
        We already considered the case $k=1$, so we proceed to understand the constraints for the case $k = 2$. In this case $D^2 = 4 < 9 \leq \ell(Z)$, so the bundle $\sF$ is a.$Z$-stable. The existence of a locally-free extension is ensured by the fact that
        \begin{equation*}
            h^0\sO_{\P^2}(2k-3) = h^0\sO_{\P^2}(1) = 3 < 9 \leq \ell(Z)
        \end{equation*}
        We conclude that the diagram
    \begin{equation}
    \label{eq:ex-in-P2}
        \begin{tikzcd}[row sep=0.4cm, column sep=0.4cm]
            & \sO_{\P^2} \arrow[hook]{r} & \sE_{Z,2} \arrow[two heads]{r} \arrow[hook]{d}& \sI_Z(4) \\
            & & \sF_{Z,2} \arrow[two heads]{d} \\
            & & \sO_{\P^2}(2),
        \end{tikzcd}        
    \end{equation}
    also gives an example of a strictly a.$Z$-stable bundle which is not contemplated by Theorem~\ref{teo:hom-cond-for-stric-z-stab}. 
    \end{remark}

    \subsection{The product \texorpdfstring{$\P^1\times \P^1$}{}}
    Looking henceforth into polycyclic varieties, we begin with the product $X = \P^1\times\P^1$ of two projective lines. The Picard group is $\Pic(X) = \Z A \oplus \Z B$, where $A$ and $B$ are the generators corresponding to each copy of $\P^1$. The intersection matrix is given by $A^2 = B^2 = 1$ and $A\cdot B = 0$. As before, we shall use the notation $\sO_X(m,n)\coloneqq\sO_X(mA+nB)$. The following proposition gives us a family of examples of strictly asymptotically $Z$-stable bundles over $\P^1\times\P^1$. 
    \begin{proposition}\label{teo:quart-str-stab-example}
        Let $X=\P^1\times\P^1$, write $\Pic(X) = \Z A \oplus\Z B$ where $A$ and $B$ are the generators corresponding to each copy of $\P^1$, and fix the polarization $L=A+B$. The following assertions hold:
    \begin{enumerate}
        \item For every $\ell>k>0$, if $Z$ is the intersection of two curves $C_1$ and $C_2$, with bi-degrees $(a_i,b_i)$ such that $a_i,b_i > \ell$, then there exists a non-trivial holomorphic vector bundle $\sE_{Z, (-k,\ell)}$, which arises as an extension
        \begin{align}
            \ses{\sO_X}{\sE_{Z,(-k,\ell)}}{\sI_Z(-2k, 2\ell).}
        \end{align}
        Furthermore, $\sE_{Z, (-k,\ell)}$ is $L$-slope-stable.
        \item There exists a non-trivial holomorphic vector bundle $\sF_{Z,(-k,\ell)}$, which arises as an extension of the form
        \begin{align}
            \ses{\sE_{-2k,2\ell}}{\sF_{Z,(-k,\ell)}}{\sO_X(-k,\ell)}.
        \end{align}
        Furthermore, $\sF_{Z,(-k,\ell)}$ is strictly asymptotically $Z$-stable with respect to the polarization $L$.
    \end{enumerate}
    \end{proposition}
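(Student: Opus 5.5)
The plan is to obtain both assertions from Theorem~\ref{teo:hom-cond-for-stric-z-stab}, applied with $D=-kA+\ell B$ and $L=A+B$, by checking conditions~\eqref{cond:1}--\eqref{cond:5} in turn. Assertion~(1) only needs the first half of that proof: Lemma~\ref{lemm:non-triv-ext-rk-2} gives a non-trivial locally free extension, using~\eqref{cond:5}. Proposition~\ref{prop:HS-stability} then gives $\mu$-stability, using~\eqref{cond:1} and~\eqref{cond:3}. Assertion~(2) is the conclusion of the theorem once~\eqref{cond:2} and~\eqref{cond:4} are also verified.

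All cohomology on $X$ will be computed with the Künneth formula, $H^p(\sO_X(m,n))=\bigoplus_{i+j=p}H^i(\P^1,\sO(m))\otimes H^j(\P^1,\sO(n))$. We take $R_0$ to be the first quadrant, since $H^0(\sO_X(a,b))\neq 0$ exactly when $a,b\ge 0$.

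\textbf{Condition~\eqref{cond:1}.} We have $D\cdot L=\ell-k>0$. The region is then $R=\{aA+bB : a,b\ge0,\ a+b\le \ell-k\}$.

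\textbf{Condition~\eqref{cond:2}.} We have $\ell(Z)=C_1\cdot C_2=a_1b_2+a_2b_1\ge 2(\ell+1)^2$. With the standard intersection form this exceeds $D^2=-2k\ell<0$. It also exceeds $k^2+\ell^2$, which is what one gets with the convention written in the text, so the condition holds under either convention.

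\textbf{Condition~\eqref{cond:3}.} This is the step requiring the most care. We use the Koszul resolution
\[
\ses{\sO_X(-a_1-a_2,-b_1-b_2)}{\sO_X(-a_1,-b_1)\oplus\sO_X(-a_2,-b_2)}{\sI_Z}
\]
and twist it by $(a,b)\in R$. The $H^0$ of the middle term vanishes because $a\le \ell-k<a_i$. The term $H^1(\sO_X(a-a_1-a_2,\,b-b_1-b_2))$ also vanishes: by Künneth each summand contains an $H^0$ of a negative-degree line bundle on $\P^1$, since $a<a_1+a_2$ and $b<b_1+b_2$. Hence $H^0(\sI_Z(a,b))=0$ for every $(a,b)\in R$.

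\textbf{Condition~\eqref{cond:4}.} Here $-D=(k,-\ell)$, and $H^1(\sO_X(k,-\ell))\supseteq H^0(\sO_{\P^1}(k))\otimes H^1(\sO_{\P^1}(-\ell))$. This is nonzero because $\ell>k>0$ forces $\ell\ge2$, so the first alternative of~\eqref{cond:4} holds.

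\textbf{Condition~\eqref{cond:5}.} We have $H^2(\sO_X(2k,-2\ell))=H^1(\sO_{\P^1}(2k))\otimes H^1(\sO_{\P^1}(-2\ell))=0$, because $2k>0$.

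With all five conditions verified, Theorem~\ref{teo:hom-cond-for-stric-z-stab} yields both assertions, including strict a.Z-stability of $\sF_{Z,(-k,\ell)}$.
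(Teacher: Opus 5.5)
Your proposal is correct and follows the paper's proof essentially step for step. Like the paper, you take $D=-kA+\ell B$ and $R_0$ the first quadrant, then check conditions (a)--(e) of Theorem~\ref{teo:hom-cond-for-stric-z-stab} using the Koszul resolution of $\sI_Z$ and Künneth; for (c) you use the same inclusion $H^0(\sI_Z(a,b))\subset H^1(\sO_X(a-a_1-a_2,\,b-b_1-b_2))=0$. The only differences are cosmetic: you obtain $\ell(Z)$ as $C_1\cdot C_2$ rather than from the cohomology of the Koszul sequence, and you note that with the standard intersection form ($A^2=B^2=0$, $A\cdot B=1$) one gets $D^2=-2k\ell$, so (b) holds whichever convention is used.
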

    \begin{proof}
    In the notation of Theorem~\ref{teo:hom-cond-for-stric-z-stab}, we choose, for each $\ell>k>0$, the divisor $D = -kA + \ell B$ and the subscheme $Z$ to be a generic intersection of two curves $C_1$ and $C_2$, with bi-degree $(a_i,b_i)$ such that $a_i,b_i > \ell$. 
    Before we get into the hypotheses of the construction Theorem, let us keep in mind the following  convenient table with the cohomologies of line bundles over $X$, denoting   $\sO_X(a,b):= \sO_X(aA+bB)$:
    \begin{table}[h!]
    \label{tab:cohomology-P1P1}
        \centering
        \begin{tabular}{|c|c|c|c|}
            \hline
            & $h^0(X, \sO_X(a,b))$ & $h^1(X, \sO_X(a,b))$ & $h^2(X, \sO_X(a,b))$ \\
            \hline
            $a \geq -1, b \geq -1$ & $(a+1)(b+1)$ & $0$ & $0$ \\
            \hline
            $a \geq -1, b \leq -1$ & $0$ & $(a+1)(-b-1)$ & $0$ \\
            \hline
            $a \leq -1, b \geq -1$ & $0$ & $(-a-1)(b+1)$ & $0$ \\
            \hline
            $a \leq -1 , b \leq -1$ & $0$ & $0$ & $(a+1)(b+1)$ \\
            \hline
        \end{tabular}
        \caption{Cohomology dimensions for $\sO_X(a,b)$ over $X = \P^1 \times \P^1$}
        \label{tab:cohomology}
    \end{table}
    Now, condition~\eqref{cond:1} is clearly satisfied since $D\cdot L = \ell - k > 0$.
    To verify condition~\eqref{cond:2}, we will need to compute $\ell(Z)$. First, considering the restriction exact sequence
    \begin{equation}
    \label{eq:rest-ex-seq}
        \ses{\sI_Z}{\sO_X}{\sO_Z},
    \end{equation}
    we apply the global section functor and use the fact that $H^1(X,\sO_X) = 0 = H^0(X,\sI_Z)$ to obtain 
    \begin{equation}
    \label{eq:card-z-P1P1}
        \ell(Z) = h^0\sO_Z = h^0\sO_X + h^1\sI_Z = 1 + h^1\sI_Z.
    \end{equation}
    Since $Z$ is the intersection of the curves $C_i$, we have the exact sequence
    \begin{equation}
    \label{eq:int-ex-sec-P1P1}
        \ses{\sO_X(-a_1-a_2,-b_1-b_2)}{\sO_X(-a_1,-b_1)\oplus \sO_X(-a_2,-b_2)}{\sI_Z},
    \end{equation}
    from which we extract that 
    \begin{align}
    \label{eq:bezout-P1P1}
        h^1\sI_Z &= h^2\sO_X(-a_1-a_2,-b_1-b_2) - h^2\sO_X(-a_1,-b_1) -h^2\sO_X(-a_2,-b_2) \nonumber\\
        & = a_1b_2 + a_2b_1 - 1.
    \end{align}
    Combining~\eqref{eq:card-z-P1P1} and~\eqref{eq:bezout-P1P1} gives us
    \begin{equation*}
        \ell(Z) = a_1b_2 + a_2b_1 > 2\ell^2 > k^2 + \ell^2 = D^2,
    \end{equation*}
    so condition~\eqref{cond:2} is satisfied. 
    In order to verify \eqref{cond:3}, we take $R_0 = \{aA+bB\;|\; a, b\geq 0\}$. It is clear from the Künneth formula that $H^0(X,\sO_X(C)) = 0$, for all $C\notin R_0$. Therefore, since $D\cdot L = \ell - k$, 
    \begin{equation*}
        R = \{aA+bB\;|\; a,b\geq 0\text{ and }a+b \leq \ell-k\}, 
    \end{equation*}
    and it remains to show that $H^0(X,\sI_Z(a,b)) = 0$ for each $a,b\geq 0$ such that $a+b \leq \ell - k$. We twist the exact sequence~\eqref{eq:int-ex-sec-P1P1} by $aA+bB$, to obtain
    \begin{equation*}
        \ses{\sO_X(-a_1-a_2+a,-b_1-b_2+b)}{\sO_X(-a_1+a,-b_1+b)\oplus \sO_X(-a_2+a,-b_2+b)}{\sI_Z(a,b)}, 
    \end{equation*}
    and since $a, b\leq \ell - k < \ell < a_i, b_i$, we find the inclusion in cohomology
    \begin{equation*}
        H^0(X,\sI_Z(a,b)) \subset H^1(X,\sO_X(-a_1-a_2+a,-b_1-b_2+b)).
    \end{equation*}
    But the right-hand side cohomology vanishes, since $-a_1 - a_2 + a < - 2\ell + \ell < -1$ and analogously $-b_1 - b_2 + b < -1$, which gives us \eqref{cond:3}. Condition~\eqref{cond:4} is also satisfied, since
    \begin{equation*}
        h^1\sO_X(k,-\ell) = (k+1)(\ell-1) > k^2 - 1 > 0.
    \end{equation*}
    Finally, we get~\eqref{cond:5} from the fact that $h^2\sO_X(2k,-2\ell) = 0$, see Table~\ref{tab:cohomology}.
    \end{proof}
    
\subsection{Blow up of \texorpdfstring{$\P^2$}{} at a point}
    
Now we seek for examples over the variety $X = {\rm Bl}_q\P^2$, the blow-up of $\P^2$ at a point $q$. We write $\Pic(X) = \Z H \oplus \Z E$, where $H$ is the pullback of the hyperplane class of $\P^2$ and $E$ is the exceptional divisor. The intersection form for these generators is given by
    \[H^2 = 1\quad E^2 = -1\quad H\cdot E = 0,\]
and the canonical divisor is $K_X = -3H + E$.
    
Again, we use the notation $\sO_X(aH + bE) \coloneqq \sO_X(a,b)$. Before proceeding to the construction of a.Z-stable bundles, we present a Lemma that provides a suitable set $R_0$, in the notation of Theorem~\ref{teo:hom-cond-for-stric-z-stab}, for this setup. It is inspired by the computations in~\cite{zhao_moduli_2022}. 
    
	\begin{lemma}\label{lemm: blp2-null-cohom}
		Let $\sO_X(a,b)$ be a line bundle over $X = {\rm Bl}_q\P^2$. If either $a<0$ or $b<-a<0$, then $H^0(X,\sO_X(a,b)) = 0$.
	\end{lemma}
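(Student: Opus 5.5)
Let $\pi\colon X\to\P^2$ be the blow-down map. The approach is to detect nonzero global sections of $\sO_X(a,b)$ through intersection numbers with two nef classes, $H$ and $H-E$. The divisor $H$ is the pullback of a line, so it is base-point free. The class $H-E$ is the strict transform of a line through $q$; it is the fibre class of the ruling $X\to\P^1$ given by projection from $q$, so it is also base-point free and nef.

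Suppose $H^0(X,\sO_X(a,b))\neq 0$. Then $aH+bE$ is linearly equivalent to an effective divisor $C$. Intersecting an effective divisor with a nef class gives a nonnegative number, so we get two inequalities.

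\emph{The $H$ inequality.} We have $C\cdot H = (aH+bE)\cdot H = a$. Concretely, choose a line $\ell\subset\P^2$ not through $q$ and not containing any component of $\pi(C)$; then $\pi^*\ell\cdot C\geq 0$. This gives $a\geq 0$, which rules out the case $a<0$.

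\emph{The $H-E$ inequality.} We have $C\cdot(H-E) = a - b\cdot(-1)(-1)\cdot(-1)$; explicitly, $(aH+bE)\cdot(H-E) = a + bE\cdot(-E) = a+b$, using $H^2=1$, $E^2=-1$ and $H\cdot E=0$. Choose a line through $q$ whose strict transform $F\sim H-E$ is not a component of $C$. This is possible because there are infinitely many such lines and $C$ has only finitely many components. Then $C\cdot F = a+b\geq 0$, that is, $b\geq -a$. This rules out the case $b<-a$, and the hypothesis $b<-a<0$ is contained in it.

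An alternative is to compute directly: $H^0(X,\sO_X(a,b))$ is the space of degree-$a$ forms vanishing to order at least $-b$ at $q$ when $b\leq 0$, and it equals $H^0(\P^2,\sO(a))$ when $b\geq 0$ via $\pi_*\sO_X(bE)=\sO_{\P^2}$. A nonzero degree-$a$ form cannot vanish to order greater than $a$ at a point, so $-b>a$ forces vanishing.

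The only delicate step is making sure $C$ can be taken to contain no component of the chosen test curve. This follows from moving the test curve within its base-point-free pencil or linear system.
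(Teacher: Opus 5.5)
Your proof is correct, but it takes a different route from the paper.

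\textbf{The paper's route.} The paper argues purely cohomologically, with restriction sequences and induction:
\begin{itemize}
\item For $a<0$ it uses $\sO_X(a-1,0)\hookrightarrow\sO_X(a,0)\twoheadrightarrow\sO_H(a)$ to get $H^0(X,\sO_X(a,0))=0$.
\item It then uses the sequences for $E$, twisted by $aH+bE$, to move $b$ to $0$.
\item For $b<-a<0$ it uses the strict transform $A\sim H-E$ of a line through $q$ and the sequence $\sO_X(a-1,b+1)\hookrightarrow\sO_X(a,b)\twoheadrightarrow\sO_A(a+b)$. It peels off $A$ exactly $a$ times; the degree $a+b<0$ on $A$ is preserved at each step. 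This lands on $\sO_X(0,a+b)$, which has no sections.
\end{itemize}

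\textbf{Your route.} You observe that $H$ and $H-E$ are base-point free, hence nef. So any effective $C\sim aH+bE$ satisfies $C\cdot H=a\ge 0$ and $C\cdot(H-E)=a+b\ge 0$.

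\textbf{Comparison.} The two arguments are closely related. The paper's step with $\sO_A(a+b)$ is the cohomological shadow of the fact that $A\cdot C<0$ forces $A$ to be a fixed component of $|C|$.

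Your version avoids the inductions and shows in one stroke that $H^0(X,\sO_X(a,b))=0$ whenever $a<0$ or $a+b<0$. This is exactly what is needed to justify $R_0=\{aH+bE \mid a\ge 0,\ a+b\ge 0\}$ in the examples. The lemma as stated actually omits the case $a=0$, $b<0$; the paper's proof only handles it as an intermediate step, while your formulation covers it automatically.

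The paper's approach, in turn, stays within the exact-sequence toolkit used throughout the paper and needs no moving-curve or nefness argument.

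Your alternative, via degree-$a$ forms vanishing to order at least $-b$ at $q$, is also valid.

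One cosmetic point: the expression $a-b\cdot(-1)(-1)\cdot(-1)$ is garbled. Keep the explicit computation $(aH+bE)\cdot(H-E)=a-bE^2=a+b$ that follows it instead.
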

	\begin{proof}
		Suppose first that $a<0$, and take the restriction exact sequence for $H$:
        \begin{equation}
        \label{eq:rest-exact-seq-H}
            \ses{\sO_X(-1,0)}{\sO_X}{\sO_H}.
        \end{equation}
		Since the induced map $H^0(X,\sO_X)\to H^0(H,\sO_H)$ is injective, the kernel  $H^0(X,\sO_X(-1,0))$ vanishes. Twisting ~\eqref{eq:rest-exact-seq-H} by $aH$ gives us the exact sequence
        \begin{equation}
        \ses{\sO_X(a-1,0)}{\sO_X(a,0)}{\sO_H(a)},
        \end{equation}
		and since $H^0(H,\sO_H(a)) = 0$ we get $H^0(X,\sO_X(a-1,0)) = H^0(X,\sO_X(a,0))$, all of which vanish. 
        
        On the other hand, if we twist the restriction exact sequence for $E$ by the divisor $aH+bE$, with $b\geq 0$, we get the exact sequence
        \begin{equation}
        \label{eq:rest-exact-seq-E}
            \ses{\sO_X(a,b-1)}{\sO_X(a, b)}{\sO_E(-b)},
        \end{equation}
		and since $H^0(E,\sO_E(-b))=0$ we can conclude that
        \[
            H^0(X,\sO_X(a,b)) = H^0(X,\sO_X(a,b-1)) = \cdots = H^0(X,\sO_X(a, 0)) = 0. 
        \]
        Now, if $b<0$, we have the inclusion
        \[\sO_X(a,b)\hookrightarrow \sO_X(a,b+1)\]
        which shows that 
        \[H^0(X,\sO_X(a,b)) = H^0(X,\sO_X(a,b+1)) = \cdots = H^0(X,\sO_X(a,0)) = 0,\]
        and this proves the first part.
		Now suppose that $b<-a<0$. First, the restriction exact sequence for $E$ is
        \begin{equation}\label{eq:rest-seq-E}
            \ses{\sO_X(0,-1)}{\sO_X}{\sO_E}.
        \end{equation}
        Since the map $H^0(X,\sO_X)\to H^0(X,\sO_E)$ is surjective we have $H^0(X,\sO_X(0,-1)) = 0$. Also we can conclude that $H^0(X,\sO_X(0,b)) = 0$ for all $b<0$. Now we twist the restriction exact sequence for $A = H-E$ by $aH+bE$, to get the exact sequence 
		\[\ses{\sO_X(a-1,b+1)}{\sO_X(a,b)}{\sO_A(a+b)},\]
		and since $a+b < 0$ we see that $H^0(A,\sO_A(a+b)) = 0$ and hence $H^0(X,\sO_X(a,b)) = H^0(X,\sO_X(a-1,b+1))$. We do this step $a$ times and obtain that 
		$H^0(X,\sO_X(a,b))=H^0(X,\sO_X(0,b+a)) = 0.$ 
	\end{proof}
    
    We present our first example in the form of the following proposition.

    \begin{proposition}
    \label{teo:blp2-str-stab-example}
        Let $X = {\rm Bl}_q\P^2$, write $\Pic(X) = \Z H\oplus \Z E$, where $H$ is corresponds to the hyperplane class, and $E$ is the exceptional divisor, and fix the polarization $L=3H - E$. The following hold:
        \begin{enumerate}
            \item If $Z$ is the intersection of two curves $C_i$, linearly equivalent to $H$, then there exists a holomorphic vector bundle $\sE_{Z,(-1,4)}$ which is given by a non-trivial extension
            \begin{align}
                \ses{\sO_X}{\sE_{Z,(-1,4)}}{\sI_Z(-2H + 8E).}
            \end{align}
            Furthermore, this bundle is $L$-slope-stable.
                
            \item There exists a holomorphic vector bundle $\sF$ given by a non-trivial extension of the form
            \begin{align}
                \ses{\sE_{Z,(-1,4)}}{\sF_{Z,(-1.4)}}{\sO_X(-H+4E)}.
            \end{align}
            Furthermore, $\sF$ is strictly asymptotically $Z$-stable with respect to $L$.
        \end{enumerate}
    \end{proposition}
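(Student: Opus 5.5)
The plan is to apply Theorem~\ref{teo:hom-cond-for-stric-z-stab} with $D=-H+4E$ and $L=3H-E$. Here $Z=C_1\cap C_2$, where $C_1,C_2$ are the total transforms of two distinct lines of $\P^2$ that do not pass through $q$. Then $Z$ is a single reduced point away from $E$, so it is a local complete intersection with $\ell(Z)=1$. For $R_0$ I take, by Lemma~\ref{lemm: blp2-null-cohom}, the set $R_0=\{aH+bE\;|\;a\ge 0,\ a+b\ge 0\}$. Its complement consists of the classes with $a<0$, or with $a\ge 0$ and $a+b<0$. For $a>0$ these are covered by the case $b<-a<0$ of the lemma. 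For $a=0$, $b<0$ the proof of the lemma shows directly that $H^0(X,\sO_X(0,b))=0$.

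First I use the intersection form $H^2=1$, $E^2=-1$, $H\cdot E=0$.
\begin{itemize}
\item Condition~\eqref{cond:1}: $D\cdot L=-3+4=1>0$.
\item Condition~\eqref{cond:2}: $D^2=1-16=-15<1=\ell(Z)$.
\end{itemize}
Next I determine $R$, that is, the classes $aH+bE$ with $a\ge0$, $a+b\ge 0$ and $3a+b\le 1$. For $a=0$ this forces $0\le b\le 1$. For $a\ge1$ we would need $-a\le b\le 1-3a$, which is impossible. Hence $R=\{0,E\}$.

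Condition~\eqref{cond:3} then reduces to two vanishings.
\begin{itemize}
\item $H^0(X,\sI_Z)=0$ holds because constants are detected on $Z$.
\item For $H^0(X,\sI_Z(E))$: since $E^2=-1$, the divisor $E$ is rigid, so $H^0(X,\sO_X(E))=\C$, spanned by the section vanishing exactly along $E$. That section does not vanish at the point $Z$, because the lines avoid $q$. So $H^0(X,\sI_Z(E))\subset H^0(X,\sO_X(E))$ is zero.
\end{itemize}
I expect this to be the only geometrically substantive step, since it is where the hypothesis that the lines avoid $q$ enters.

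For condition~\eqref{cond:4} I show that $H^1(X,\sO_X(H-4E))\neq0$ using Riemann--Roch, Proposition~\ref{prop:Euler-characteristic}, with $K_X=-3H+E$ and $\chi(\sO_X)=1$. Put $D'=H-4E$. Then $D'^2=-15$ and $D'\cdot K_X=-3+4=1$, so $\chi(\sO_X(D'))=1+\tfrac{-15-1}{2}=-7$. The other two cohomology groups vanish.
\begin{itemize}
\item $h^0\sO_X(1,-4)=0$ by Lemma~\ref{lemm: blp2-null-cohom}, since $-4<-1<0$.
\item By Serre duality, $h^2\sO_X(1,-4)=h^0\sO_X(-4,5)=0$, since $a=-4<0$.
\end{itemize}
Therefore $h^1\sO_X(-D)=7\neq 0$.

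Finally, for condition~\eqref{cond:5}, Serre duality gives $h^2\sO_X(-2D)=h^0\sO_X(K_X+2D)=h^0\sO_X(-5,9)=0$, again because $a<0$. All hypotheses of Theorem~\ref{teo:hom-cond-for-stric-z-stab} are now verified. It yields that $\sE_{Z,(-1,4)}$ is a non-trivial, locally free, $\mu$-stable extension, and that $\sF_{Z,(-1,4)}$ is a non-trivial, strictly a.Z-stable bundle with respect to $L$.
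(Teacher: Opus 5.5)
Your proposal is correct and takes essentially the paper's route: apply Theorem~\ref{teo:hom-cond-for-stric-z-stab} with $D=-H+4E$, $R_0$ from Lemma~\ref{lemm: blp2-null-cohom} and $R=\{0,E\}$, and check (a)--(e). The only local variation is how you kill $H^0(X,\sI_Z(E))$: you use $h^0(\sO_X(E))=1$ and $Z\cap E=\emptyset$, whereas the paper twists the Koszul resolution of $\sI_Z$ by $E$ and shows $h^1(\sO_X(-2,1))=0$ by Riemann--Roch. Your values $h^1(\sO_X(1,-4))=7$ and $h^2(\sO_X(-2D))=h^0(\sO_X(-5,9))=0$ are also the correct ones: the paper's value $3$ comes from computing with $K_X=-3H-E$, and it checks $H^2(X,\sO_X(-2,8))=H^2(X,\sO_X(2D))$ instead of $H^2(X,\sO_X(-2D))$, but these slips do not affect the conclusion.
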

    \begin{proof}
        We show that the choice of $(Z,D)$ satisfies the conditions in  Theorem~\ref{teo:hom-cond-for-stric-z-stab}. Condition~\eqref{cond:1} is clearly satisfied since $D\cdot L = -3 + 4 = 1$. Condition~\eqref{cond:2} is also quickly verified since 
    \[D^2 = 1-16 = -15 < 0 \leq \ell(Z),\]
    independently of $Z$. Skipping condition~\eqref{cond:3} for a moment, for condition~\eqref{cond:4} we compute $H^1(X,\sO_X(1,-4))$ using the Riemann-Roch formula:
    \begin{align*}
        h^1(\sO_X(1,-4)) &= h^0(\sO_X(1,-4))+ h^0(-4, 3)-\frac{1}{2}(H-4E)(4H - 3E) - 1\\
        &= 3.
    \end{align*}
    Condition~\eqref{cond:5} is also direct since
    \begin{equation*}
        H^2(X,\sO_X(-2,8)) = H^0(X,\sO_X(-1,-7)) = 0.
    \end{equation*}
    Finally, we address condition~\eqref{cond:3}. By Lemma~\ref{lemm: blp2-null-cohom} we can take 
    \begin{equation}
        R_0 = \{aH + bE\;|\;a\geq 0\text{ and } a+b\geq 0\},  
    \end{equation}
    and since $D\cdot L = 1 > 0$, we have
    \begin{equation}
        R = \{aH + bE\;|\;a\geq 0\text{, } a+b\geq 0 \text{ and } 3a+b\leq 1\} = \{0, E\}.
    \end{equation}
    Since we already know that $H^0(X,\sI_Z) = 0$, we just need to show that $H^0(X,\sI_Z(0,1)) = 0$. Recall that $Z$ is the intersection of the pullbacks of two lines in $\P^2$ that generically do not pass through $q$, and since each line is represented by $H$ in the Picard group, we have the following exact sequence defining $\sI_Z$: 
    \begin{equation}
    \label{eq:int-ex-seq-blp2}
        \ses{\sO_X(-2,0)}{\sO_X(-1,0)^{\oplus 2}}{\sI_Z}.
    \end{equation}
    Twisting~\eqref{eq:int-ex-seq-blp2} by $E$, we get 
    \[\ses{\sO_X(-2,1)}{\sO_X(-1,1)^{\oplus 2}}{\sI_Z(0,1)}.\]
    Looking at the associated long exact sequence in cohomology, we obtain
    \[H^0(X,\sI_Z(0,1)) \hookrightarrow H^1(X,\sO_X(-2,1)),\]
    since $H^0(X,\sO_X(-1,1)) = 0$. Finally, by the Riemann-Roch formula,
    \[h^1(\sO_X(-2,1)) = -\frac{1}{2}(-2H + E)\cdot H - 1 = 0,\]
    and we conclude that $H^0(X,\sI_Z(0,1)) = 0$.
    \end{proof}
    We use the same approach for the second example, which is given by the following proposition.
    \begin{proposition}
    \label{teo:blp2-str-stab-example-2}
        Let $X = {\rm Bl}_q\P^2$, write $\Pic(X) = \Z H\oplus \Z E$, where $H$ is corresponds to the hyperplane class and $E$ is the exceptional divisor, and fix the polarization $L=3H-E$. 
        \begin{enumerate}
            \item If $Z$ is the generic intersection of two curves $C_i$, linearly equivalent to $2H$ and $2H-2C$ respectively, then there exists a holomorphic vector bundle $\sE_{Z,(-1,5)}$ given by a non-trivial extension
            \begin{align}
                \ses{\sO_X}{\sE_{Z,(-1,5)}}{\sI_Z(-2H + 10E).}
            \end{align}
            Furthermore, $\sE_{Z,(-1,5)}$ is $L$-slope-stable.
            
            \item There exists a holomorphic vector bundle $\sF$ given by a non-trivial extension of the form
            \begin{align}
                \ses{\sE_{Z,(-1,4)}}{\sF_{Z,(-1,5)}}{\sO_X(-H+5E)}.
            \end{align}
            Furthermore, $\sF$ is strictly asymptotically $Z$-stable with respect to $L$.
        \end{enumerate}
    \end{proposition}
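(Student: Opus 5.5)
The plan is to verify hypotheses (a)--(e) of Theorem~\ref{teo:hom-cond-for-stric-z-stab} for the pair $(Z,D)$ with $D=-H+5E$, following the proof of Proposition~\ref{teo:blp2-str-stab-example}. Here $Z=C_1\cap C_2$ with $C_1\sim 2H$ and $C_2\sim 2H-2E$; I read the ``$2C$'' in the statement as a typo for $2E$. I take $C_1,C_2$ generic, so $Z$ is a reduced complete intersection, in particular lci, consisting of $\ell(Z)=C_1\cdot C_2=4$ points disjoint from $E$. All cohomology of line bundles will come from Lemma~\ref{lemm: blp2-null-cohom}, Serre duality with $K_X=-3H+E$, and Riemann--Roch in the form
\[\chi(\sO_X(x,y)) = 1+\tfrac12\big(x(x+3)-y(y-1)\big).\]

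\textbf{Numerical conditions.} Since $D\cdot L=-3+5=2>0$, condition (a) holds. Condition (b) follows from $D^2=1-25=-24<4=\ell(Z)$. For (e), Serre duality gives $H^2(\sO_X(-2,10))\cong H^0(\sO_X(-1,-9))$, which vanishes by the Lemma since $a<0$. For (d) I use the first alternative. Here $h^0(\sO_X(1,-5))=0$ since $b<-a<0$, and $h^2(\sO_X(1,-5))=h^0(\sO_X(-4,6))=0$ since $a<0$. Hence $h^1=-\chi=-(1+\tfrac12(4-30))=12\neq0$.

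\textbf{Condition (c).} This is the main step. With $R_0=\{a\ge0,\ a+b\ge0\}$ and $3a+b\le 2$, I find $R=\{0,\,E,\,2E,\,H-E\}$. The class $2E$ seems to be missing from the list in the introduction, so I include it. I argue directly from linear systems rather than only through the Koszul resolution
\[\ses{\sO_X(-4,2)}{\sO_X(-2,0)\oplus\sO_X(-2,2)}{\sI_Z}.\]
\begin{itemize}
\item For $B=0,E,2E$, a nonzero section of $\sO_X(bE)$ vanishes exactly on $bE$, since $h^0(\sO_X(0,b))=1$. Because $Z\cap E=\emptyset$, that section does not vanish on $Z$, so $H^0(\sI_Z(B))=0$.
\item For $B=0$ and $B=E$ the Koszul route also works. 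The twisted middle terms have no sections by the Lemma. Moreover $h^1(\sO_X(-4,2))=0$, because $\chi=2$ and $h^2=h^0(\sO_X(1,-1))=2$. Likewise $h^1(\sO_X(-4,3))=0$.
\item For $B=2E$ the Koszul route fails, since $h^1(\sO_X(-4,4))=3$. This is why I use the direct argument above.
\item For $B=H-E$, twisting the Koszul sequence gives middle terms $\sO_X(-1,-1)$ and $\sO_X(-1,1)$, which have no sections. So $H^0(\sI_Z(1,-1))\hookrightarrow H^1(\sO_X(-3,1))$. Here $\chi=1$ and $h^2=h^0(\sO_X(0,0))=1$, so $h^1=0$. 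Geometrically, the four points of $Z$ do not lie on a single line through $q$.
\end{itemize}

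With (a)--(e) established, Theorem~\ref{teo:hom-cond-for-stric-z-stab} yields both parts at once. Part (1) is the locally free, $\mu$-stable, non-trivial $\sE_{Z,(-1,5)}$, via Lemma~\ref{lemm:non-triv-ext-rk-2} and Proposition~\ref{prop:HS-stability}. Part (2) is the non-trivial, strictly a.Z-stable $\sF$, via Proposition~\ref{teo:strict-z-stable-bun}; there the kernel should read $\sE_{Z,(-1,5)}$. I expect the main obstacle to be the careful enumeration of $R$ and the vanishing at $B=2E$, where the Koszul bound fails and one must use genericity of $Z$ relative to $E$.
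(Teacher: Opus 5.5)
Your proposal follows the paper's route: you verify hypotheses (a)--(e) of Theorem~\ref{teo:hom-cond-for-stric-z-stab} for $D=-H+5E$, using Lemma~\ref{lemm: blp2-null-cohom}, Serre duality and Riemann--Roch. Your values for (a), (b) and (d) agree with the paper's, and you silently correct its typo $D\cdot L=-3+4$.

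Your enumeration of $R$ is better than the paper's. The paper writes $R=\{0,E,H-E\}$ and so never checks $B=2E$, even though $2E\in R_0$ and $2E\cdot L=2\le D\cdot L$. As you note, the Koszul bound used for the other classes cannot handle $2E$, since $h^1(\sO_X(-4,4))=3$. Your argument closes this hole in the paper's proof: $h^0(\sO_X(2E))=1$, the unique section vanishes only along $E$, and $Z\cap E=\emptyset$ because a generic $C_1\sim 2H$ misses $E$.

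There is one slip, inherited from the paper. Condition (e) concerns $H^2(X,\sO_X(-2D))=H^2(\sO_X(2,-10))$, whereas you computed $H^2(\sO_X(-2,10))=H^2(\sO_X(2D))$. The fix is immediate. By Serre duality $H^2(\sO_X(2,-10))\cong H^0(\sO_X(-5,11))^\vee$, and this vanishes by Lemma~\ref{lemm: blp2-null-cohom} since $a=-5<0$. So the conclusion of your proof is unaffected.
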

    \begin{proof}
        We show that the choice of $(Z,D)$ satisfies the conditions in  Theorem~\ref{teo:hom-cond-for-stric-z-stab}. Condition~\eqref{cond:1} is clearly satisfied since $D\cdot L = -3 + 4 = 2$. Condition~\eqref{cond:2} is also quickly verified since 
        \[D^2 = 1-25 = -24 < 0 \leq \ell(Z),\]
        independently of $Z$. Skipping condition~\eqref{cond:3} for a moment, for condition~\eqref{cond:4} we compute $H^1(X,\sO_X(1,-5))$, using the Riemann-Roch formula:
        \begin{align*}
            h^1(\sO_X(1,-5)) &= h^0(\sO_X(1,-5))+ h^0(-4, 6)-\frac{1}{2}(H-5E)(4H - 6E) - 1\\
            &= 12.
        \end{align*}
        Condition~\eqref{cond:5} is also direct since
        \begin{equation*}
            H^2(X,\sO_X(-2,10)) = H^0(X,\sO_X(-1,-9)) = 0.
        \end{equation*}
        Finally, for condition~\eqref{cond:3}, we recall  Lemma~\ref{lemm: blp2-null-cohom} to take 
        \begin{equation}
            R_0 = \{aH + bE\;|\;a\geq 0\text{ and } a+b\geq 0\}.  
        \end{equation}
        Since $D\cdot L = 2$, we have
        \begin{equation}
            R = \{aH + bE\;|\;a\geq 0\text{, } a+b\geq 0 \text{ and } 3a+b\leq 2\} = \{0, E, H-E\}.
        \end{equation}
        We already know that $H^0(X,\sI_Z) = 0$, so we just need to show that 
        \[H^0(X,\sI_Z(0,1)) = H^0(X,\sI_Z(1,-1)) = 0.\]
        Recall that $Z$ is the intersection of$C_1\sim 2H$ and $C_2 \sim 2H - 2E$, thus we have the following exact sequence defining $\sI_Z$: 
        \begin{equation}
        \label{eq:int-ex-seq-blp2-2}
            \ses{\sO_X(-4,2)}{\sO_X(-2,0)\oplus \sO_X(-2,2)}{\sI_Z}.
        \end{equation}
        Twisting~\eqref{eq:int-ex-seq-blp2-2} by $E$, we get 
        \[\ses{\sO_X(-4,3)}{\sO_X(-2,1)\oplus \sO_X(-2,4)}{\sI_Z(0,1)}.\]
        Looking at the associated long exact sequence in cohomology, we obtain
        \[H^0(X,\sI_Z(0,1)) \hookrightarrow H^1(X,\sO_X(-4,3)),\]
        since $H^0(X,\sO_X(-2,1)) = 0 = H^0(X,\sO_X(-2,4))$. On the other hand, the Riemann-Roch formula gives 
        \[h^1(\sO_X(-4,3)) = -\frac{1}{2}(-4H + 3E)\cdot (-H+2E) - 1 = 0,\]
        and therefore $H^0(X,\sI_Z(0,1)) = 0$. 
        Now we twist~\eqref{eq:int-ex-seq-blp2-2} by $H-E$, obtaining 
        \[\ses{\sO_X(-3,1)}{\sO_X(-1,-1)\oplus \sO_X(-1,1)}{\sI_Z(1,-1)}.\]
        The associated long exact sequence gives the injection  
        \[H^0(X,\sI_Z(1,-1))\hookrightarrow H^1(X,\sO_X(-3,1)),\]
        since $H^0(X,\sO_X(-1,-1))=0=H^0(X,\sO_X(-1,1))$. On the other hand, the Riemann-Roch formula again gives
        
        \[h^1(\sO_X(-3,1)) =h^2\sO_X(-3,1))-\frac{1}{2}(-3H + E)\cdot 0 - 1 = h^0(X,\sO_X)-1 = 0,\]
        which concludes the proof.
    \end{proof}

\bibliographystyle{alpha}
\bibliography{Bibliografia-2023-05}

\end{document}